\theoremstyle{plain}
\newtheorem{theorem}{Theorem}[section]
\newtheorem{corollary}[theorem]{Corollary}
\newtheorem{lemma}[theorem]{Lemma}
\newtheorem{proposition}[theorem]{Proposition}
\theoremstyle{definition}
\theoremstyle{remark}
\newtheorem{remark}[theorem]{Remark}
\numberwithin{equation}{section}
\title[Positive intertwiners for Bessel functions of type B]{Positive intertwiners 
 for Bessel functions of type B}
\author{Margit R\"osler}
\address{Institut f\"ur Mathematik, Universit\"at Paderborn, Warburger Str. 100, D-33098 Paderborn, Germany}
\email{roesler@math.upb.de}
\author{Michael Voit}
\address{Fakult\"at Mathematik, Technische Universit\"at Dortmund,
          Vogelpothsweg 87,
          D-44221 Dortmund, Germany}
\email{michael.voit@math.tu-dortmund.de}
\subjclass[2010]{Primary 33C67; Secondary 33C52, 43A85}
\keywords{Bessel functions, Sonine formulas, Dunkl theory, intertwining operators, multivariate Laguerre polynomials}
\begin{document}
\date{\today}

\begin{abstract} 
Let $V_k$ denote Dunkl's intertwining operator for the root sytem $B_n$  with multiplicity 
 $k=(k_1,k_2)$ with $k_1\geq 0, k_2>0$. 
It was recently shown  that  the positivity of the operator $V_{k^\prime\!,k} = V_{k^\prime}\circ V_k^{-1}$ which intertwines
 the Dunkl operators associated with $k$ and
 $k^\prime=(k_1+h,k_2)$  implies that $h\in[k_2(n-1),\infty[\,\cup\,(\{0,k_2,\ldots,k_2(n-1)\}-\mathbb Z_+)$.
This is also a necessary condition for the existence of positive Sonine formulas between the associated Bessel functions.
In this paper we present two partial converse positive results:
For $k_1 \geq 0, \,k_2\in\{1/2,1,2\}$ and $h>k_2(n-1)$, the operator  $V_{k^\prime\!,k}$ is positive when restricted to functions which are invariant under the Weyl group, and there is an associated positive Sonine formula for the Bessel functions of type $B_n$. Moreover, the same positivity results hold for arbitrary $k_1\geq 0,  k_2>0$ and $h\in k_2\cdot \mathbb Z_+.$ 
 The proof is based on a formula of Baker and Forrester on connection coefficients between multivariate Laguerre polynomials 
and an approximation of Bessel functions by Laguerre polynomials.
\end{abstract}

\maketitle

\section{Introduction}

Let $R$ be a reduced root system in a 
 finite-dimensional Euclidean space 
 $(\frak a, \langle\,.\,,\,.\,\rangle)$ with  finite Coxeter group $W.$ Fix a  multiplicity function, i.e., a
  $W$-invariant function $k:R \to [0,\infty[$ and denote by $\{T_\xi(k), \xi\in \frak a\}$ the associated commuting family of
 rational Dunkl operators as introduced in \cite{D1}, see also \cite{D2, DX}.
Then there is a 
 unique isomorphism on the vector space $\mathbb C[\frak a]$  
  of polynomial functions on $\frak a$ which preserves the degree of homogeneity and satisfies 
$$ V_k(1)=1, \quad T_\xi(k) V_k = V_k \partial_\xi \quad \text{ for all } 
  \xi \in \frak a.$$
 By \cite{R1}, $V_k$ is  positive on $\mathbb C[\frak a],$ i.e. for $p\in\mathbb C[\frak a]$   with
 $p\geq 0$ we have $V_kp\geq 0$ on $\frak a$. This is equivalent to the fact that
 for each $x\in \frak a$ there is a unique compactly supported probability measure 
$\mu_{x}^k\in M^1(\frak a)$ such that the Dunkl kernel $E_k$ associated with $R$ and $k$ has the positive integral representation
\begin{equation}\label{Dunkl_kernel_int} E_k(x,z) = \int_{\frak a} e^{\langle \xi,z\rangle } d\mu_x^k(\xi), \quad \forall \, 
x \in \frak a, z\in \frak a_{\mathbb C}.
\end{equation}
The support of $\mu_x^k$ is contained in the convex hull  of 
the $W$-orbit of $x$. 
Eq.~\eqref{Dunkl_kernel_int} generalizes the Harish-Chandra integral representation of spherical functions  on symmetric spaces of Euclidean type (Ch.IV of \cite{Hel}), as
 for 
certain  multiplicities $k$, the Bessel functions 
\begin{equation}\label{Def-bessel-function}J_k(x,z) := \frac{1}{|W|}\sum_{w\in W} E_k(wx,z), \quad z\in \frak a_{\mathbb C},
\end{equation}
coincide with the spherical functions of a Cartan motion group, where $R$ and $k$ depend on the root space data of the 
underlying symmetric space, see \cite{O, dJ2} for details. The Bessel function $J_k$ is $W$-invariant in both arguments, and in the 
 geometric cases, the integral formula for $J_k$ obtained from
\eqref{Dunkl_kernel_int} by taking $W$-means is just the Harish-Chandra formula \cite[Prop. IV.4.8]{Hel}.

\medskip

We now consider two multiplicities $k, k^\prime$ on $R$ 
with  $k^\prime \geq k\geq 0$, i.e., 
$k^\prime(\alpha) \geq k(\alpha)\geq 0$ for all $\alpha \in R$.  
In \cite{RV3} we studied the operator
$$ V_{k^\prime\!,k} := V_{k^\prime} \circ V_k^{-1} $$
which
 intertwines the 
Dunkl operators  with multiplicities $k$ and $k^\prime$:
$$ T_\xi(k^\prime) V_{k^\prime\!, k} = V_{k^\prime\!, k} T_\xi(k) \quad (\xi \in \frak a).$$
It had been conjectured until recently that
 $V_{k^\prime\!, k}$ is also positive. This is equivalent 
 to the statement that for each $x\in \frak a$, there is
a unique compactly supported probability measure $\mu_x^{k^\prime\!,k}\in M^1(\frak a)$ such that the Sonine formula
\begin{equation}\label{Sonine_E}  E_{k^\prime}(x,z) = \int_{\frak a} E_k(\xi, z)\, d\mu_x^{k^\prime\!,k}(\xi)
\quad \text{for all } z\in \frak a_{\mathbb C}
\end{equation}
holds. 
Notice  that $\eqref{Sonine_E}$ yields a corresponding formula for the Bessel function:
\begin{equation}\label{Sonine_B}  J_{k^\prime}(x,z) = \int_{\frak a} J_k(\xi,z) \,d\widetilde \mu_x^{k^\prime\!,k}(\xi) 
\quad (z\in \frak a_{\mathbb C})
 \end{equation}
with some $W$-invariant probability measure $\widetilde \mu_x^{k^\prime\!,k}.$ Denote by $\widetilde V_{k^\prime\!, k}$ the restriction of $V_{k^\prime\!, k}$ to $W$-invariant functions. As in \cite{RV3}, it is easy to see that 
the existence of a positive Sonine formula \eqref{Sonine_B} for the Bessel functions is equivalent to the positivity of $\widetilde V_{k^\prime\!, k}$.
\medskip

In the rank-one case with 
$R= \{\pm 1\}\subset \mathbb R$, we have
$\, J_k(x,y) = j_{k-1/2}(ixy) \,$
with the one-dimensional Bessel function
\begin{equation}\label{B_eindim} j_\alpha(z)  = \, _0F_1(\alpha+1;-z^2/4) \quad\quad 
(\alpha\in\mathbb C\setminus\{-1,-2,\ldots\}).\end{equation}
Here \eqref{Sonine_B} is just the well-known classical 
Sonine formula  (see e.g.~\cite{A}):
\begin{equation}\label{intrep-1-dim}
j_{\alpha+\beta}(z)= 2\frac{\Gamma(\alpha+\beta+1)}{\Gamma(\alpha+1)\Gamma(\beta)}
\int_{0}^1j_{\alpha}(zx)  x^{2\alpha+1}(1-x^2)^{\beta-1} dx
\end{equation}
for  $\alpha\in]-1,\infty[$ and $\beta \in ]0,\infty[$.
It was proven in \cite{X} that the operator $V_{k^\prime\!, k}$ with $k^\prime > k \geq 0$  is also  positive in the rank-one setting.

On the other hand, for the root systems
$$ B_n = \{ \pm e_i, \, \pm e_i \pm e_j, \, 1\leq i < j \leq n\} \subset \mathbb R^n$$
with $n\ge2$ and certain parameters $k^\prime \geq k \geq 0, $
\eqref{Sonine_E} and  \eqref{Sonine_B} were disproved 
in \cite{RV3}. For these root systems we write the  multiplicities as $k=(k_1,k_2)$ with $k_1, k_2$ the values of $k$ on
 the roots $\,\pm e_i$ and $\,\pm e_i\pm e_j,$
respectively. The following result is shown in Section 3 of \cite{RV3} for the Bessel functions $J_{k}^B$ of type $B_n$.

\begin{theorem}\label{main_Bessel} Let $k=(k_1, k_2)$ with  $k_1 \geq 0, \> k_2 >0$, and consider $k^\prime = (k_1+h,k_2)$ with
  $h>0$. Assume that there exists
 a probability measure $m\in M_b(\mathbb R^n)$ such that the restricted Sonine formula
\begin{equation}\label{Sonine_1} J_{k^\prime}^B({\bf 1},iy) = \int_{\mathbb R^n}  J_{k}^B(\xi,iy)\> dm(\xi)
\quad\text{for all }y \in \mathbb R^n
\end{equation}
holds, where ${\bf 1} = (1, \ldots, 1) \in \mathbb R^n$. Then 
$h$ is contained in the set
$$ \Sigma(k_2):= \,]k_2(n-1), \infty [ \, \cup \, \bigl(\{0, k_2, \ldots, k_2(n-1)\} -\mathbb Z_+\bigr).$$
\end{theorem}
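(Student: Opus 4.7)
The plan is to translate the positive Sonine identity \eqref{Sonine_1} into a Jack-polynomial moment problem and then to identify the resulting positivity question with the existence of a generalized beta-type distribution, whose parameter range is governed by a Wallach-type set.

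First I would use the multivariate $_0F_1^{(1/k_2)}$-expansion of the Bessel function of type $B_n$,
$$ J_k^B(\xi, iy) \,=\, \sum_\lambda \frac{C_\lambda^{(1/k_2)}(\xi^2)\, C_\lambda^{(1/k_2)}(-y^2/4)}{(\mu)_\lambda^{(1/k_2)}\, C_\lambda^{(1/k_2)}({\bf 1})\, |\lambda|!}, \quad \mu := k_1 + (n-1)k_2 + \tfrac{1}{2}, $$
(sum over partitions $\lambda$, with generalized Pochhammer symbols), and the analogous formula for $J_{k'}^B$ with $\mu$ replaced by $\mu + h$. Substituting both expansions into \eqref{Sonine_1} and matching coefficients of the linearly independent Jack polynomials $C_\lambda^{(1/k_2)}(-y^2/4)$ on both sides, the Sonine formula becomes equivalent to the infinite system of moment equations
$$ \int_{\mathbb R^n} \frac{C_\lambda^{(1/k_2)}(\xi^2)}{C_\lambda^{(1/k_2)}({\bf 1})}\, dm(\xi) \,=\, \frac{(\mu)_\lambda^{(1/k_2)}}{(\mu+h)_\lambda^{(1/k_2)}} \qquad (\lambda \text{ partition}). $$

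A Paley-Wiener argument comparing the exponential types of the two sides of \eqref{Sonine_1} as entire functions of $y$ then forces the support of $m$ into the Weyl polytope $\mathrm{conv}(W{\bf 1})$; hence $m$ is moment-determinate and is uniquely characterized by the Jack moments above. The problem reduces to deciding for which $h>0$ the prescribed Pochhammer-ratio sequence is the Jack-moment sequence of a positive measure. I would identify this sequence as the Jack moments of a generalized beta distribution of shape parameter $h$ on a rank-$n$ cone with Jack index $1/k_2$. For the geometric values $k_2 \in \{1/2, 1, 2\}$, these are the classical beta distributions on the cones of positive real/complex/quaternionic Hermitian $n \times n$ matrices, and the Gindikin-Wallach theorem identifies the admissible parameter range with $\Sigma(k_2)$. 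For general $k_2 > 0$, the same identification should follow from the Baker-Forrester connection-coefficient formula for multivariate Laguerre polynomials of Jack index $1/k_2$, combined with the passage from Laguerre polynomials to Bessel functions.

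\textbf{Main obstacle.} The decisive difficulty is the last step in the non-geometric setting $k_2 \notin \{1/2, 1, 2\}$, where no classical symmetric-cone interpretation is available. To prove that $h \notin \Sigma(k_2)$ forces negativity of at least one Jack moment, one has to carry out a careful pole-and-sign analysis of the generalized Pochhammer symbol $(\mu + h)_\lambda^{(1/k_2)}$ and exhibit, for each forbidden value of $h$, a specific partition $\lambda$ (chosen sufficiently long in the right direction to detect the pole pattern) for which the right-hand side has the wrong sign. This combinatorial sign analysis, rather than the structural reduction itself, is the technical heart of the proof.
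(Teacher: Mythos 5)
There is a genuine gap, and it sits exactly at the step you call the technical heart. (Note first that the present paper does not prove Theorem \ref{main_Bessel} at all; it quotes it from \cite{RV3}, so your proposal can only be judged on its own merits.) With $\mu=\mu(k)=k_1+k_2(n-1)+\tfrac12$ and $\alpha=1/k_2$, every factor of the generalized Pochhammer symbols you want to analyze is strictly positive: $[\mu]^\alpha_\lambda=\prod_{j=1}^n\bigl(\mu-k_2(j-1)\bigr)_{\lambda_j}$ with $\mu-k_2(j-1)=k_1+k_2(n-j)+\tfrac12>0$, and likewise $\mu+h-k_2(j-1)>0$ for every $h>0$. Hence the prescribed Jack moments $[\mu]^\alpha_\lambda/[\mu+h]^\alpha_\lambda$ are strictly positive for \emph{all} partitions $\lambda$ and \emph{all} $h>0$: there is no partition for which the right-hand side ``has the wrong sign,'' and no pole-and-sign analysis of $[\mu+h]^\alpha_\lambda$ can produce one, since zeros or sign changes of $[\,\cdot\,]^\alpha_\lambda$ only occur for parameters $\leq k_2(n-1)$, which are never reached here. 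So the obstruction for $h\notin\Sigma(k_2)$ is invisible at the level of individual moments; one has to show that no positive measure whatsoever has this moment sequence, which is a global positivity statement of Gindikin type. For $k_2\notin\{1/2,1,2\}$ there is no classical Gindikin--Wallach theorem to cite, and establishing the needed analogue is essentially the content of the theorem itself. Appealing to the Baker--Forrester connection coefficients does not close this gap either: as Corollary \ref{main_Laguerre} of the paper shows, the logical direction there is the opposite one (positivity of the Laguerre connection coefficients is a \emph{hypothesis} from which $h\in\Sigma(\alpha^{-1})$ is deduced via Theorem \ref{main_Bessel}). In \cite{RV3} the necessity is obtained by a different mechanism (transform identities converting \eqref{Sonine_1} into positivity statements for confluent ${}_1F_1^\alpha$-type series, which are then analyzed asymptotically), not by a termwise sign check.

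Two further steps are also not free as written. The measure $m$ is only assumed to be a probability measure on $\mathbb R^n$, so termwise integration of the alternating, non-dominated ${}_0F_1^\alpha$-series and the subsequent matching of coefficients of $C_\lambda^\alpha(-y^2/4)$ require justification; a priori the right-hand side of \eqref{Sonine_1} need not even be real-analytic in $y$. Your Paley--Wiener claim that \eqref{Sonine_1} forces $\mathrm{supp}\,m\subseteq\mathrm{conv}(W\mathbf 1)$ --- which you use both for this interchange and for moment determinacy --- is a nontrivial assertion about Bessel (symmetrized Dunkl) transforms of measures and is asserted without proof. These points look repairable (for instance by integrating \eqref{Sonine_1} against Gaussian weights, where Fubini is available, which is close to the route taken in \cite{RV3}), but the sign argument in the first paragraph is the step that, as proposed, cannot work.
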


Therefore, positivity of $\widetilde V_{k^\prime\!, k}$ or $V_{k^\prime\!, k}$ requires that  $h\in\Sigma(k_2)$.  Theorem \ref{main_Bessel}
 is related  to a classical result of Gindikin \cite{G} about  the Wallach set which parametrizes those Riesz distributions on a Euclidean Jordan algebra which are actually positive measures, 
 see \cite{FK}.
 Related  results for Riesz and Beta
 distributions  on symmetric cones and in the Dunkl setting are given in \cite{RV2, R3}.

On the other hand, there are some positive Sonine formulas for $J_{k^\prime}^B$ in terms of $J_k^B$  with $k^\prime=(k_1+h,k_2)$ if $k_1 \geq 0$ and $h$ is large enough; these will be treated in Section \ref{large}. 
 Namely, if $k_2\in\{1/2,1,2\}$ and $h>k_2(n-1)$, then  a Sonine formula 
 follows from the fact that in this case
 the Bessel functions $J_{k}^B$ are closely related to Bessel functions 
on matrix cones over $\mathbb R, \mathbb C, \mathbb H$ 
where  positive Sonine formulas are available by \cite{H, RV2}. Moreover, a restricted explicit Sonine formula of the form \eqref{Sonine_1}
was obtained in \cite{RV3} for arbitrary $k_1\geq 0, k_2 >0$  and $h>k_2(n-1)$ as a consequence of Kadell's \cite{Ka} generalization of the Selberg integral.

The main results of this paper are contained in Sections \ref{discrete} and \ref{infty}. In particular, in Theorem \ref{existence-pos-int-rep-lag} we prove that 
 the Bessel function $J_{k}^B$ satisfies a Sonine formula for all   $k_1, k_2>0$ and $\,h\in k_2\cdot\mathbb Z_+$, which includes all parameters $h$ in the discrete part of the generalized Wallach set 
 $$\{0,k_2,\ldots, k_2(n-1)\}\, \cup \, [k_2(n-1), \infty[\,.$$ 
 The proof of this result is based on a formula of Baker and Forrester \cite{BF1} on positive connection coefficients between multivariate Laguerre polynomial systems 
and an approximation of Bessel functions by Laguerre polynomials.  Finally in Section \ref{infty}, we consider the limit $h\to\infty$
 under which the Bessel functions of type B tend to those of type A. This leads to a positive integral representation of Bessel functions of type $A$ in terms of such of type $B$.

\section{Basic facts on  Bessel functions}

We start with some background on  rational Dunkl theory  from  \cite{D1, D2, DJO, R1}. 
Let  $R$ be  a reduced root system in a finite-dimensional Euclidean space $(\frak a, \langle\,.\,,\,.\,\rangle)$  and  
$W$ 
the associated finite Coxeter group.
The Dunkl operators
 associated with $R$ and multiplicity  $k$ are 
 $$ T_\xi(k) = \partial_\xi  + \frac{1}{2}\sum_{\alpha\in R}
 k(\alpha) \langle \alpha, \xi\rangle \frac{1}{\langle \alpha,\,.\,\rangle}(1-\sigma_\alpha) , \quad \xi \in \frak a
 $$
 where the action of $W$ on functions $f: \frak a \to \mathbb C$  is given by $w.f(x) = f(w^{-1}x).$
 It was shown in \cite{D1} that the  $T_\xi(k), \, \xi \in \frak a$,   commute. 
 Multiplicities $k\ge0$ are regular, i.e., 
the joint kernel of the $T_\xi(k)$, considered as linear operators on 
 polynomials, consists of the constants only.  This is equivalent to the existence of a necessarily unique
 intertwining operator 
 $V_k$ as described in the introduction; see \cite{DJO}. 
 Moreover, for each  $y \in \frak a_\mathbb C,$ there is a unique solution $f = E_k(\,.\,,y)$ of the 
joint eigenvalue problem
$$ T_\xi(k)f = \langle \xi,y\rangle f\quad \forall \, \xi \in \frak a, \,\, f(0)=1.$$
The function $E_k$ is called the Dunkl kernel. The mapping $(x,y)\mapsto E_k(x,y)$ is
 analytic on $ \frak a_\mathbb C\times \frak a_\mathbb C$ with $ E_k(x,y) = E_k(y,x)$, $ E_k(x,0)=1$,  and
 $$   E_k(\lambda x,y) = E_k(x, \lambda y), \quad E_k(wx, wy) = E_k(x,y)\quad (\lambda \in \mathbb C, w \in W).$$ 

In this paper, we mainly consider the Dunkl kernel $E_k^B$ and the Bessel function $J_k^B$ associated with the root system 
$B_n$ on  $\mathbb R^n$ with its usual inner product, c.f. \eqref{Def-bessel-function}.
The associated reflection group is the hyperoctahedral group $W(B_n)= S_n \ltimes \mathbb Z_2^n, $ and
the multiplicity has the form $k=(k_1, k_2)$ as explained in the introduction. In Section \ref{infty}, we shall also consider the Bessel function $J_k^A$ associated with the root system $$A_{n-1} = \{\pm(e_i-e_j), 1 \leq i < j \leq n\}\subset \mathbb R^n.$$ Here the multiplicity is given by a single parameter $k\in [0, \infty[.$ 
The Bessel functions $J_k^B$ and $J_k^A$ have simple expressions in terms of multivariate hypergeometric functions in the sense of \cite{K, M}. To recall these we need some more notation.
Let 
$$\Lambda_n^+ =\{ \lambda\in \mathbb Z_+^n: \lambda_1 \geq \cdots \geq \lambda_n\}$$ 
be the set of partitions of length at most $n$. We denote  
by $C_\lambda^\alpha, \, \lambda \in \Lambda_n^+$  the Jack polynomials of index $\alpha>0$ in $n$ variables (see \cite{Sta}),
normalized such that
\begin{equation}\label{normal}
(z_1 + \cdots + z_n)^m = \sum_{|\lambda|=m} C_\lambda^\alpha(z)\quad (m \in \mathbb Z_+).
\end{equation}

Following \cite{K}, we define for $\mu \in \mathbb C$ with 
$\text{Re}\, \mu > \frac{1}{\alpha}(n-1)$ and arguments $z,w \in \mathbb C^n$ the hypergeometric function 

\begin{equation}\label{def-hypergeometric-function} 
    _0F_1^\alpha(\mu; z,w):= \sum_{\lambda\in \Lambda_n^+} \frac{1}{[\mu]_\lambda^\alpha\, |\lambda|!} \cdot \frac{C_\lambda^\alpha(z) 
C_\lambda^\alpha(w)}{C_\lambda^\alpha({\bf 1})}, \quad {\bf 1} = (1, \ldots, 1) \in \mathbb R^n,\end{equation}
with 
 the generalized Pochhammer symbol 
\begin{equation}\label{general_Pochhammer}
 [\mu]_\lambda^\alpha = \prod_{j=1}^n \bigl(\mu - \frac{1}{\alpha} (j-1)\bigr)_{\lambda_j}.
\end{equation}
If  $n=1$, then the $C_\lambda^\alpha$ are independent of $\alpha$ and given by  $C_\lambda^\alpha(z) = z^\lambda, \, \lambda\in \mathbb Z_+$, and 
$$ _0F_1^\alpha(\mu;-\frac{z^2}{4},1) = j_{\mu-1}(z).$$
Let $k=(k_1,k_2)$ on $B_n$ with $k_1\geq 0$ and $k_2>0$. Then according to \cite[Prop. 4.5]{R2} (c.f. also \cite[Sect. 6]{BF1}), 
\begin{equation}\label{ident_Bessel_B} J_k^B(z,w) = \,_0F_1^\alpha\Bigl(\mu; \frac{z^2}{2}, \frac{w^2}{2}\Bigr)
	\end{equation}
with $\, \alpha = \frac{1}{k_2}\, $ and $\, \displaystyle  \mu=\mu(k)= k_1+k_2(n-1)+ \frac{1}{2}.$ 

Moreover,  for $n \geq 2$ and $k\in \,]0,\infty[,$ the Bessel function of type $A_{n-1}$ has the following hypergeometric expansion (\cite[Sect. 3]{BF2}):
 \begin{equation}\label{Bessel_A} J_k^A(z,w) = \,_0F_0^\alpha(z,w) = \sum_{\lambda \in \Lambda_n^+} \frac{1}{|\lambda|!}\cdot 
\frac{C_\lambda^\alpha(z) C_\lambda^\alpha(w)}{C_\lambda^\alpha({\bf
    1})}
\quad \text{with }\, \alpha =1/k\,. \end{equation}

\section{Explicit Sonine formulas for large parameters}\label{large}

We first recapitulate a restricted Sonine formula of the form \eqref{Sonine_1} for  $J_k^B$ from \cite{RV3}.
 Let $k = (k_1, k_2)$ with $\,k_1 \geq 0$ and $k_2>0.$ 
For a real parameter  $h> k_2(n-1)$ consider the probability density 
$$ f_{k,h}(x) := c_{k,h}^{-1}\cdot \prod_{i=1}^n (x_i^2)^{k_1} (1-x_i^2)^{h-k_2(n-1)-1} 
\prod_{i<j} |x_i^2-x_j^2|^{2k_2}$$
on $[0,1]^n$ with the normalization constant (a Selberg-type integral)
$$ c_{k,h} = \int_{[0,1]^n} \prod_{i=1}^n (x_i^2)^{k_1} (1-x_i^2)^{h-k_2(n-1)-1} 
\prod_{i<j} |x_i^2-x_j^2|^{2k_2}dx.$$ 
Then according to Eq. (3.4) in  \cite{RV3},
\begin{equation}\label{density} J_{(k_1+h, k_2)}^B({\bf 1},z) = \int_{[0,1]^n} J_k^B(x,z) f_{k,h}(x)dx\quad (z \in \mathbb C^n). \end{equation}
The proof of this formula  is based on \eqref{ident_Bessel_B} and Kadell's \cite{Ka} generalization of the Selberg integral, which implies a restricted Sonine formula for $\,_0F_1^\alpha.$
 The restricted Sonine formula 
 \eqref{density} can be extended to a complete  Sonine formula, i.e. a Sonine formula for Bessel functions with arbitrary second argument, in the cases $k_2=1/2,1,2\,$ via
 Bessel functions on matrix cones.
 To explain this, we recapitulate some notations from \cite{FK, R2, RV2}.
For the (skew) fields $\mathbb F:= \mathbb R, \mathbb C, \mathbb H$
 with real dimension $d=1,2,4$  consider the vector spaces $H_n:=H_n(\mathbb F)$ of all Hermitian 
$n\times n$ matrices over  $\mathbb F$ as well as the 
cones
$\Pi_n:=\Pi_n(\mathbb F)\subset H_n$
 of all positive semidefinite matrices. Let $\Omega_n$ be the interior of $\Pi_n$ consisting of all strictly positive
definite matrices.
The Bessel functions associated with the symmetric cone $\Omega_n$ (in the sense of \cite{FK}) are defined in terms of the spherical polynomials
\[ \Phi_\lambda (a) = \int_{U_n} \Delta_\lambda(uau^{-1})du \quad (\lambda \in \Lambda_n^+, \, \, a\in H_n).\]
Here
$du$ is the normalized Haar measure of the compact group $\,U_n=U_n(\mathbb F)$ and $\Delta_\lambda$ denotes the power function
\[ \Delta_\lambda(a) := \Delta_1(a)^{\lambda_1-\lambda_2}
\Delta_2(a)^{\lambda_2-\lambda_3} \cdot\ldots\cdot \Delta_q(a)^{\lambda_q}\]
 with the principal minors $\Delta_i(a)$. 
We renormalize the spherical polynomials according to 
$\, Z_\lambda := c_\lambda \Phi_\lambda$
with  $c_\lambda >0$ 
such that
$$(tr \,a)^k \,=\, \sum_{|\lambda|=k} Z_\lambda(a)
\quad\quad\text{for}\quad
 k\in \mathbb N_0\,;$$
c.f. Section XI.5.~of \cite {FK}.
The $Z_\lambda$ 
 depend only on the eigenvalues of their argument and are given in terms of Jack polynomials: for $a \in H_n$ with eigenvalues $x = (x_1, \ldots, x_n) \in \mathbb R^n$, 
$$Z_\lambda(a) =
C_\lambda^\alpha(x) \quad \text{with } \alpha = \frac{2}{d}\,,$$
see  \cite {FK, M2}. Let
$$ \mu_0:= \frac{d}{2}(n-1).$$
Then for $\mu > \mu_0$, the Bessel function $J_\mu$ associated with $\Omega_n$ 
is defined according to  \cite{FK} as the $\, _0F_1$-hypergeometric series \begin{equation}\label{power-j}
 J_\mu(a) =
 \sum_{\lambda\in \Lambda_n^+} \frac{(-1)^{|\lambda|}}{[\mu]_\lambda^{2/d}\,|\lambda|!}\,
Z_\lambda(a) \quad (a\in H_n).
\end{equation}
Notice that here
$[\mu]_\lambda^{2/d} \not=
0$ for
all  $\lambda$. Comparing formulas \eqref{power-j} and \eqref{ident_Bessel_B}, one obtains 
 for $a\in H_n$ with eigenvalues $x \in \mathbb R^n$ and $\mu\geq \mu_0 + \frac{1}{2}$ the identity 
 \begin{equation}\label{D_identity} J_\mu(a^2) = J_k^B(2ix,{\bf 1}) \quad \text{ with } \,k=k(\mu,d) = 
\bigl(\mu - \mu_0 - \frac{1}{2}, \frac{d}{2}\bigr); \end{equation}
see \cite[Cor. 4.4]{R2}. Recall that $J_k^B$ is invariant under $W(B_n)$ in both variables. We denote the  associated standard Weyl chamber
by 
 $$ C_n^B:= \{ x \in \mathbb R^n: x_1 \geq \ldots \geq x_n\geq 0\}.$$
Now consider $r,s\in \Pi_n$ with ordered eigenvalues $x=(x_1,\ldots, x_n), \, y = (y_1, \ldots, y_n) \in C_n^B$. Then  
 by Corollary 4.6. of \cite{R2},
\begin{equation}\label{D_int} J_k^B(2ix,y)=\int_{U_n} J_\mu(r us^2u^{-1}r)\, du \quad \text{ with } k=k(\mu,d).\end{equation}
For indices  $\mu, \nu > \mu_0$ 
we introduce the Beta-Riesz probability measures
\begin{equation}\label{def-beta-dist}
d\beta_{\mu,\nu}(r):= \frac{1}{ B_\Omega(\mu,\nu)}  \Delta(r)^{\mu-1-\mu_0}\Delta(I-r)^{\nu-1-\mu_0}
dr\Bigl|_{\Omega_n^I} \end{equation}
on the relatively compact set $\Omega_n^I:=\{r\in \Omega_n: \> I_n-r\in  \Omega_n\},$ where
 $$ B_\Omega(\mu,\nu):=\int_{\Omega_n^I} \Delta(r)^{\mu-1-\mu_0}\Delta(I-r)^{\nu-1-\mu_0}
dr. $$
 We recall the following Sonine formula from
 \cite[Theorem 1]{RV2}; see also (2.6') of \cite{H} for $\mathbb F=\mathbb R$.

\begin{proposition}\label{int-rep-classical}
For all $\mu,\nu >\mu_0$  and $r\in\Pi_n$,
$$J_{\mu+\nu}(r)=
\int_{ \Omega_n^I } J_\mu(r s) d\beta_{\mu,\nu}(s).$$
\end{proposition}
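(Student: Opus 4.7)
The plan is to prove the identity by comparing power series expansions in terms of the spherical polynomials $Z_\lambda$. Inserting the Bessel series \eqref{power-j} on the right-hand side and interchanging summation and integration reduces the claim to the single Beta-type identity
\begin{equation*}
\int_{\Omega_n^I} Z_\lambda(rs)\,d\beta_{\mu,\nu}(s) \,=\, \frac{[\mu]_\lambda^{2/d}}{[\mu+\nu]_\lambda^{2/d}}\,Z_\lambda(r) \quad (\lambda \in \Lambda_n^+).
\end{equation*}
Granted this, the right-hand side of the proposition becomes $\sum_\lambda \frac{(-1)^{|\lambda|}}{[\mu+\nu]_\lambda^{2/d}\,|\lambda|!}\,Z_\lambda(r)$, which is exactly $J_{\mu+\nu}(r)$ by \eqref{power-j} with parameter $\mu+\nu$.

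For the Beta-type identity, I would first treat $r \in \Omega_n$ and extend to the boundary by continuity. Since $\beta_{\mu,\nu}$ is $U_n$-invariant and $Z_\lambda$ depends only on eigenvalues, one has $Z_\lambda(rs) = Z_\lambda(r^{1/2}sr^{1/2})$. The change of variables $s = r^{-1/2}tr^{-1/2}$ maps $\Omega_n^I$ onto the truncated cone $\{t \in \Omega_n: t < r\}$; combined with the standard identities $\Delta(r^{-1/2}tr^{-1/2}) = \Delta(r)^{-1}\Delta(t)$, $\Delta(I - r^{-1/2}tr^{-1/2}) = \Delta(r)^{-1}\Delta(r-t)$, and the corresponding Jacobian factor (a fixed power of $\Delta(r)$), the integral transforms into
\begin{equation*}
\frac{Z_\lambda(r)}{B_\Omega(\mu,\nu)\,\Delta(r)^{\mu+\nu}}\int_{\{t<r\}} \frac{Z_\lambda(t)}{Z_\lambda(r)}\,\Delta(t)^{\mu-1-\mu_0}\Delta(r-t)^{\nu-1-\mu_0}\,dt
\end{equation*}
up to an absolute constant; this is (up to normalization) a convolution of two Riesz distributions on $\Omega_n$ evaluated at $r$. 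The Gindikin-Koecher Beta formula on symmetric cones \cite[Chapter VII]{FK} then yields the stated Pochhammer ratio as the value of this convolution.

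The main technical obstacle is the precise bookkeeping of the $\Delta$-factors and the Jacobian arising from the change of variables, so that they combine to produce the pure ratio $[\mu]_\lambda^{2/d}/[\mu+\nu]_\lambda^{2/d}$ independent of $r$. The interchange of sum and integral is routine: the series \eqref{power-j} converges absolutely and uniformly on compact subsets of $H_n$ since $[\mu]_\lambda^{2/d}\neq 0$ and $Z_\lambda \geq 0$ on $\Pi_n$, and $\Omega_n^I$ is relatively compact. Once the Beta identity is established for a single $\lambda$, termwise summation completes the proof.
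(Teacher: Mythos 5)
The paper does not actually prove Proposition \ref{int-rep-classical}: it is quoted from \cite[Theorem 1]{RV2} (and from (2.6') of \cite{H} for $\mathbb F=\mathbb R$). Measured against the standard argument in that literature, your strategy is the right one: termwise integration of the series \eqref{power-j} against $\beta_{\mu,\nu}$ reduces the proposition to the single identity $\int_{\Omega_n^I} Z_\lambda(rs)\,d\beta_{\mu,\nu}(s)=\frac{[\mu]_\lambda^{2/d}}{[\mu+\nu]_\lambda^{2/d}}\,Z_\lambda(r)$, the claimed ratio is correct, and the interchange of sum and integral is indeed routine.

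The one step that is loose as written is your justification of this key identity. After the substitution $s=r^{-1/2}tr^{-1/2}$ the integrand still carries the factor $Z_\lambda(t)$, so what you obtain is \emph{not} a convolution of two Riesz distributions; the Gindikin--Koecher formula for Riesz distributions is exactly the case $\lambda=0$ and cannot by itself produce the $\lambda$-dependent Pochhammer ratio. What you need is the Beta integral with a spherical-polynomial factor,
\begin{equation*}
\int_{\Omega_n^I}\Phi_\lambda(s)\,\Delta(s)^{\mu-\mu_0-1}\Delta(I-s)^{\nu-\mu_0-1}\,ds
=\frac{[\mu]_\lambda^{2/d}}{[\mu+\nu]_\lambda^{2/d}}\,B_\Omega(\mu,\nu),
\end{equation*}
which follows from the Gamma- and Beta-integrals for generalized power functions $\Delta_{\mathbf s}$ in \cite[Ch.~VII]{FK} after averaging over $U_n$, or from your Laplace-transform argument applied to the measure $\Phi_\lambda(t)\Delta(t)^{\mu-\mu_0-1}dt$ rather than to a Riesz distribution. (Also, in your displayed expression the prefactor should be $\Delta(r)^{-(\mu+\nu-\mu_0-1)}$ once the Jacobian $\Delta(r)^{-(\mu_0+1)}$ of $t\mapsto r^{-1/2}tr^{-1/2}$ is included; the powers then do cancel to give an $r$-independent constant, as you anticipate.) Finally, you can avoid the change of variables, the Jacobian bookkeeping and the continuity extension to singular $r$ altogether: since the eigenvalues of $rs$ and $\sqrt s\,r\sqrt s$ agree, $Z_\lambda(rs)=Z_\lambda(\sqrt s\,r\sqrt s)$ for all $r\in\Pi_n$, and combining the $U_n$-invariance of $\beta_{\mu,\nu}$ with the product formula $\int_{U_n}\Phi_\lambda(\sqrt s\,uru^{-1}\sqrt s)\,du=\Phi_\lambda(r)\Phi_\lambda(s)$ for spherical polynomials (see \cite[Ch.~XI]{FK}) reduces the integral directly to the displayed Beta integral.
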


Here we adopt a common notation in the literature and consider the Bessel function $J_\mu$ as a function of the eigenvalues of the (not necessarily Hermitian) matrix $rs$, which 
coincide with those of the Hermitian matrices $\sqrt s \,r\sqrt s$ and 
$\sqrt r \,s\sqrt r.$

From Proposition \ref{int-rep-classical} and formula \eqref{D_int} we now derive a Sonine representation for $J_k^B$. 
For this, consider
the map $\,\sigma:\Pi_n\to C_n^B$ where $\sigma(r)$ is the ordered spectrum of $r\in \Pi_n$. We shall identify $x\in \mathbb R^n$ with the diagonal matrix $\text{diag}(x_1, \ldots, x_n)\in H_n$. 
 Theorem VI.2.3 of \cite{FK} states that 
\begin{equation}\label{polar} \int_{\Pi_n} f(r) \,dr\,=\,c_0 \int_{U_n}\int_{ C_n^B} f(u x u^{-1})\cdot 
\prod_{i<j} (x_i-x_j)^d \,dx\, du \end{equation}
for integrable functions $f:\Pi_n\to\mathbb C, $ with some constant $c_0>0$. Let 
$$C_{n,1}^B:=\{x\in C_n^B:\> x_1\le1\} = \{ x \in \mathbb R^n: 1 \geq x_1 \geq \cdots \geq x_n \geq 0\}.$$ 
Then by \eqref{polar}, the image measure of $\beta_{\mu,\nu}$ under $\sigma$
is  the probability measure
\begin{equation}
d\rho_{k_1,k_2,h}(x)= \frac{1}{B_{k_1,k_2,h}}  \prod_{i=1}^n x_i^{k_1-1/2}(1-x_i)^{h-1-\mu_0} \cdot \prod_{i<j}(x_i-x_j)^{2k_2}
dt\Bigl|_{C_{n,1}^B} \end{equation}
on $C_{n,1}^B$
with the
normalization constant $B_{k_1,k_2,h}=B_\Omega(\mu,\nu)/c_0$ where $(k_1,k_2)=k(\mu,d)$ as in \eqref{D_identity} and $h = \nu$. We obtain

\begin{theorem}\label{int-rep-classical-chamber} Let $\,\mathbb F \in \{\mathbb R, \mathbb C, \mathbb H\}$ and $k_2 = d/2$ with $d= \text{dim}_{\mathbb R} \mathbb F.$
Let further $k_1\ge 0$ and $h> \mu_0=\frac{d}{2}(n-1)$.
Then for all $z\in \mathbb C^n$ and $x\in C_n^B$,
$$ J_{(k_1+h,k_2)}^B(x,z) = \int_{C_{n,1}^B} \int_{U_n(\mathbb F)} J_{(k_1,k_2)}^B
\bigl(\sqrt{\sigma(x u\xi u^{-1}x)}\,,z\bigr)\,du\,
   d\rho_{k_1,k_2,h}(\xi).$$
\end{theorem}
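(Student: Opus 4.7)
The plan is to compose the integral formula \eqref{D_int} (applied once with parameters corresponding to $k^\prime$ and once in reverse with parameters corresponding to $k$) with Proposition \ref{int-rep-classical}, and then use the polar decomposition \eqref{polar} to convert an integral against $\beta_{\mu,h}$ on $\Omega_n^I$ into one against $du\,d\rho_{k_1,k_2,h}$ on $U_n\times C_{n,1}^B$. Set $\mu=k_1+\mu_0+\tfrac12$, so that $k=k(\mu,d)$ and $k^\prime=k(\mu+h,d)$, and identify $x,y\in C_n^B$ with their diagonal matrix embeddings $r,s\in\Pi_n$. Applying \eqref{D_int} to $k^\prime$ and then Proposition \ref{int-rep-classical} to the inner $J_{\mu+h}$ gives
\[
J_{k^\prime}^B(2ix,y)=\int_{U_n}\int_{\Omega_n^I} J_\mu\bigl((rvs^2v^{-1}r)\tau\bigr)\,d\beta_{\mu,h}(\tau)\,dv.
\]

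The key step is a second application of \eqref{D_int} to the inner expression. The cyclic identity $\mathrm{spec}(AB)=\mathrm{spec}(BA)$ together with the similarity $\mathrm{spec}(PQ)=\mathrm{spec}(\sqrt Q\,P\sqrt Q)$ for Hermitian positive $P,Q$ shows that the eigenvalues of $(rvs^2v^{-1}r)\tau$ coincide with those of $Avs^2v^{-1}A$ with $A:=\sqrt{r\tau r}\in\Pi_n$. Hence \eqref{D_int}, applied with outer factor $A$ (eigenvalues $\sqrt{\sigma(r\tau r)}$) and inner factor $s$ (eigenvalues $y$), gives $\int_{U_n} J_\mu(Avs^2v^{-1}A)\,dv = J_k^B(2i\sqrt{\sigma(r\tau r)},y)$, and Fubini produces
\[
J_{k^\prime}^B(2ix,y)=\int_{\Omega_n^I} J_k^B\bigl(2i\sqrt{\sigma(x\tau x)},\,y\bigr)\,d\beta_{\mu,h}(\tau).
\]
The polar decomposition \eqref{polar} combined with the $U_n$-invariance of the density of $\beta_{\mu,h}$ (which involves only $\Delta(\tau)$ and $\Delta(I-\tau)$) factorises $d\beta_{\mu,h}(\tau) = du\,d\rho_{k_1,k_2,h}(\xi)$ under $\tau=u\xi u^{-1}$ (the exponents matching via $\mu-1-\mu_0=k_1-\tfrac12$ and $d=2k_2$), which converts the previous display into
\[
J_{k^\prime}^B(2ix,y)=\int_{U_n}\int_{C_{n,1}^B} J_k^B\bigl(2i\sqrt{\sigma(xu\xi u^{-1}x)},\,y\bigr)\,d\rho_{k_1,k_2,h}(\xi)\,du.
\]

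Finally, \eqref{ident_Bessel_B} combined with the scaling symmetry $\,_0F_1^\alpha(\mu;cz,dw)=\,_0F_1^\alpha(\mu;dz,cw)$—an immediate consequence of the homogeneity $C_\lambda^\alpha(cz)=c^{|\lambda|}C_\lambda^\alpha(z)$ and the symmetry of $\,_0F_1^\alpha$ in its two vectorial arguments—produces $J_k^B(2ia,b)=J_k^B(a,2ib)$, and the analogous identity for $k^\prime$. Substituting these on both sides of the preceding display converts it into the theorem's formula at $z=2iy$. Both sides of the asserted identity are entire in $z$, so their agreement on the set $\{2iy:y\in C_n^B\}$, which has nonempty interior in $i\mathbb R^n$, extends by analytic continuation to all $z\in\mathbb C^n$. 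The main technical obstacle is the eigenvalue bookkeeping: one must verify that the spectrum of $(rvs^2v^{-1}r)\tau$ reduces, via the cyclic and similarity manipulations above, to the form required by the second invocation of \eqref{D_int}, and that all auxiliary unitaries arising from spectral decompositions are absorbed by Haar invariance.
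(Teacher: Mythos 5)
Your proposal is correct and follows essentially the same route as the paper: apply \eqref{D_int} for the shifted parameter, insert the Sonine formula of Proposition \ref{int-rep-classical}, use cyclicity of the spectrum and a second application of \eqref{D_int} (the paper integrates over the Haar variable after the polar decomposition \eqref{polar}, you before—an immaterial reordering since $\rho_{k_1,k_2,h}$ is exactly the image of $\beta_{\mu,h}$ under $\sigma$), and finish by homogeneity and analytic continuation in $z$. No gaps.
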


\begin{proof}  Let $x,y\in  C_n^B$ and put $\mu=k_1+\mu_0+\frac{1}{2}.$ Formulas \eqref{D_int}, \eqref{polar}  and Proposition \ref{int-rep-classical} imply that
\begin{align*}
J_{(k_1+h,k_2)}^B(x,2iy) \,&
=\,  \int_{U_n}\int_{\Omega_n^I} J_{\mu}\bigl(
x u y^2 u^{-1}xs\bigr)\, d\beta_{\mu,h}(s)\,du \\ &= \,
\int_{U_n}\int_{\Omega_n^I} J_{\mu}\bigl(
 u y^2 u^{-1}x sx \bigr)\, d\beta_{\mu,h}(s) \,du\\
 &= \,
\int_{U_n}\int_{U_n}\int_{C_{n,1}^B} J_\mu\bigl(uy^2u^{-1}\cdot  x v\xi v^{-1}x\bigr) \,d\rho_{k_1,k_2, h}(\xi)\,dv\, du \\
&=\,\int_{C_{n,1}^B} \int_{U_n} J_{(k_1,k_2)}^B
\bigl(\sqrt{\sigma(x v  \xi v^{-1}x)}\,,2iy\bigr)\,dv\,
   d\rho_{k_1,k_2,h}(\xi).
   \end{align*}
 Analytic extension in the first argument now yields the assertion.
\end{proof}

Note that for  $k_2 =1/2,1,2$ and $\eta={\bf 1}$, the integral representation in Theorem \ref{int-rep-classical-chamber} 
coincides with formula \eqref{density}.

\section{Positive Sonine formulas for discrete parameters}\label{discrete}
 
Besides the  Sonine formulas in Theorem \ref{int-rep-classical-chamber} and Eq. \eqref{density}, 
Theorem \ref{main_Bessel} admits the following  partial converse statement:

\begin{theorem}\label{existence-pos-int-rep-lag} Let $k=(k_1,k_2)$ with $k_1\geq 0, k_2 >0$ and $\,h\in k_2\cdot\mathbb Z_+ =\{0, k_2, 2k_2,\ldots\}$.
 Then
  for each $x\in C_n^B$ there exists a unique probability measure $m_x\in M^1(C_n^B)$ such that
\begin{equation}\label{Sonine_1-allg} J_{(k_1+h,k_2)}^B(x,z) =
 \int_{C_n^B}  J_{k}^B(\xi,z) \,dm_x(\xi)\end{equation}
for all $z\in\mathbb C^n.$ The support of $m_x$ is contained in 
$[0,x]:= [0,x_1]\times\ldots\times [0,x_n].$
\end{theorem}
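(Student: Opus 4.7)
The plan is to obtain the Sonine formula as a scaling limit of a polynomial identity, combining Baker--Forrester's positive connection formula for generalized Laguerre polynomials with a multivariate Laguerre-to-Bessel asymptotic. Set $\alpha = 1/k_2$ and $\mu = k_1 + k_2(n-1) + 1/2$, and write $h = m k_2$ with $m \in \mathbb{Z}_+$. For each $\lambda \in \Lambda_n^+$, let $L_\lambda^{\alpha,\mu}$ denote the symmetric generalized Laguerre polynomial in $n$ variables of \cite{BF1}. The Baker--Forrester connection formula states
$$L_\lambda^{\alpha,\mu+h}(z) = \sum_{\sigma \subseteq \lambda} c_{\lambda,\sigma}(h)\, L_\sigma^{\alpha,\mu}(z)$$
with non-negative coefficients $c_{\lambda,\sigma}(h) \geq 0$, the sum being taken over partitions with $\sigma_i \leq \lambda_i$ for all $i$. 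Since $L_\sigma^{\alpha,\mu}(0) > 0$, dividing through by $L_\lambda^{\alpha,\mu+h}(0)$ converts this into a convex combination with weights $p_{\lambda,\sigma}(h) := c_{\lambda,\sigma}(h) L_\sigma^{\alpha,\mu}(0) / L_\lambda^{\alpha,\mu+h}(0) \geq 0$ summing to $1$.

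The second ingredient is the multivariate analogue of Erd\'elyi's classical asymptotic $L_n^\alpha(z/n)/\binom{n+\alpha}{n} \to {}_0F_1(\alpha+1; -z)$. Fix $x \in C_n^B$ and choose a sequence of partitions $\lambda^{(N)} \in \Lambda_n^+$ with $\lambda^{(N)}/N \to x$ componentwise. A term-by-term comparison of the Jack-polynomial expansions of $L_\lambda^{\alpha,\mu}(\cdot)$ and of $\,_0F_1^\alpha(\mu;\cdot,\cdot)$ yields
$$\frac{L_{\lambda^{(N)}}^{\alpha,\mu}(z/N)}{L_{\lambda^{(N)}}^{\alpha,\mu}(0)} \longrightarrow \,_0F_1^\alpha(\mu;\, -z,\, x) \quad (N \to \infty),$$
uniformly for $z$ in compact subsets of $\mathbb{C}^n$. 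Via \eqref{ident_Bessel_B} the right-hand side is (after the substitution $z = -w^2/2$) the type-$B$ Bessel function $J_k^B(x, w)$; the analogous statement holds with $\mu, k_1$ replaced by $\mu+h, k_1+h$, since the partitions $\lambda^{(N)}$ encode only $x$.

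Substituting $z \mapsto z/N$ and $\lambda = \lambda^{(N)}$ into the rescaled Baker--Forrester convex combination yields, for each $N$, a discrete probability measure
$$m_x^{(N)} := \sum_{\sigma \subseteq \lambda^{(N)}} p_{\lambda^{(N)},\sigma}(h)\, \delta_{\sigma/N} \in M^1(C_n^B),$$
supported in the box $[0, \lambda^{(N)}/N] \subseteq [0,x] + o(1)$, since $\sigma_i \leq \lambda_i^{(N)}$ forces $\sigma_i/N \leq \lambda_i^{(N)}/N$. The family $\{m_x^{(N)}\}_N$ is uniformly compactly supported, hence tight, so by Prokhorov's theorem a subsequence converges weakly to some $m_x \in M^1(C_n^B)$ with $\operatorname{supp}(m_x) \subseteq [0,x]$. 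Passing to the limit in the rescaled identity, using the uniform Laguerre-to-Bessel convergence to interchange limit and integration, yields
$$J_{(k_1+h,k_2)}^B(x, w) = \int_{[0,x]} J_k^B(\xi, w)\, dm_x(\xi)$$
first for real $w$, and then for all $w \in \mathbb{C}^n$ by analytic continuation in $w$. Uniqueness of $m_x$ follows from the fact that $\{J_k^B(\cdot, w) : w \in \mathbb{C}^n\}$ separates compactly supported $W(B_n)$-invariant measures on $C_n^B$, by injectivity of the Bessel transform.

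The main obstacle lies in setting up the multivariate Laguerre-to-Bessel approximation with matching normalizations, so that the convex-combination structure of Baker--Forrester survives cleanly under the scaling limit and the partition-containment bound $\sigma \subseteq \lambda^{(N)}$ translates into the box constraint $\xi \leq x$ in the limiting measure. Once these are secured, the tightness/Prokhorov extraction and the analytic continuation in $w$ are routine.
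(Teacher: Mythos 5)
Your proposal follows essentially the same route as the paper: the Baker--Forrester positive connection formula (renormalized into a convex combination), the scaling limit of renormalized multivariate Laguerre polynomials along $\lambda^{(N)}/N\to x$ to the ${}_0F_1^\alpha$-Bessel function, discrete measures on $\{\sigma/N\}$, Prohorov extraction, and uniqueness via injectivity of the Dunkl/Bessel transform. The only cosmetic difference is that the paper reduces to the single step $h=k_2$ (the shift by $1/\alpha$ in BF (4.23)) and notes the general case by iteration, whereas you invoke the iterated formula for $h=mk_2$ directly; this is the same argument.
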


The uniqueness statement in  Theorem \ref{existence-pos-int-rep-lag} follows from the injectivity of the Dunkl transform of bounded measures, see \cite[Theorem 2.6]{RV0}.
For the proof of the existence part, it suffices to consider $ z= iy$ with $y \in \mathbb R^n$ and prove the case $h=k_2$. 
In this case, the theorem can be derived from an explicit formula of Baker and Forrester \cite[formula (4.23)]{BF1}
on the connection coefficients between multivariate Laguerre polynomial systems. To start with, recall that according to \cite[Proposition 4.3]{BF1}, 
the Laguerre polynomials in $n$ variables associated with parameters $\alpha >0$ and $a > -1$ are given by 
\begin{equation}\label{def-Laguerre} 
L_\kappa^{a}(x; \alpha)=\frac{[a+q]_\kappa^{\alpha}}{|\kappa|!}
\sum_{\lambda\subseteq \kappa} \binom{\kappa}{\lambda}
\frac{(-1)^{|\lambda|}}{[a+q]_\lambda^{\alpha}}\,\frac{C_\lambda^\alpha(x)}{ C_\lambda^\alpha({\bf 1}) }\quad (\kappa \in \Lambda^+_n\,, \, \, x\in \mathbb R^n)
\end{equation}
where $ q = 1 + (n-1)/\alpha,\,$ the notion $\lambda\subseteq \kappa$ means that the diagram of $\lambda$ is contained in that of $\kappa$, i.e. 
$\lambda_i \leq \kappa_i$ for all $i=1, \ldots, n,$ and the generalized binomial coefficients $\,\binom{\kappa}{\lambda}=\binom{\kappa}{\lambda}_\alpha $ are defined by 
the binomial formula for the Jack polynomials, 
$$ \frac{C_\kappa^\alpha (x +{\bf 1})}{C_\kappa^\alpha({\bf 1})} = \sum_{\lambda \subseteq \kappa}\binom{\kappa}{\lambda}\,
\frac{C_\lambda^\alpha(x)}{C_\lambda^\alpha({\bf 1})}.$$
We consider the renormalized Laguerre polynomials
$$ \widetilde L_\kappa^a(x;\alpha) := \frac{L_\kappa^a(x;\alpha)}{L_\kappa^a(0;\alpha)} = \sum_{\lambda\subseteq \kappa} 
\binom{\kappa}{\lambda}\,
\frac{(-1)^{|\lambda|}}{[a+q]_\lambda^{\alpha}}\,\frac{C_\lambda^\alpha(x)}{ C_\lambda^\alpha({\bf 1}) }$$
and choose the parameters according to those in 
\eqref{ident_Bessel_B}, namely $\,\alpha:= \frac{1}{k_2},$ $\, a := k_1 -\frac{1}{2}.$ 
Then $\, a+q = k_1 + k_2(n-1) + \frac{1}{2} = \mu(k) =: \mu.$ 

\smallskip

The first step towards the proof of Theorem \ref{existence-pos-int-rep-lag}
is the 
 following limit result for  Laguerre polynomials which generalizes  Proposition 3.3 of \cite{Far}
for $\alpha = 1.$ In the one-dimensional case $n=1$, this limit transition is well-known.

\begin{lemma}\label{laguerre-to-bessel}
 Let $k= (k_1,k_2)$ with $k_1\geq 0, k_2 >0, $ put $ \, \alpha:= \frac{1}{k_2}\,, \,a:= k_1-\frac{1}{2}$ and fix $\,y\in \mathbb R^n$. For $x \in C_n^B\,$ 
 consider the sequence of partitions $(\lambda_j(x))_{j\in \mathbb N} \subseteq \Lambda^+_n$ with 
$\lambda_j(x):= \lfloor j\cdot x\rfloor$ where Gaussian brackets are taken componentwise.
 Then
 $\lim_{j\to\infty} \lambda_j(x)/j=x$, and
$$\lim_{j\to\infty}\widetilde L_{\lambda_j(x)}^{a}(y^2/j;\alpha) =\,J_k^B(iy,2\sqrt{x}\,)$$
locally  uniformly in $x$ (here the square root is also taken componentwise).  
\end{lemma}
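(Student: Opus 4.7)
The plan is to expand both sides as hypergeometric-type series indexed by partitions $\lambda\in \Lambda_n^+$, identify the termwise limit, and then apply dominated convergence. With $\alpha=1/k_2$, $a=k_1-1/2$ (so $a+q=\mu(k)=:\mu$), the definition \eqref{def-Laguerre} and the homogeneity $C_\lambda^\alpha(y^2/j)=j^{-|\lambda|}C_\lambda^\alpha(y^2)$ give
$$\widetilde L_{\lambda_j(x)}^{a}(y^2/j;\alpha)=\sum_{\lambda\subseteq \lambda_j(x)}\binom{\lambda_j(x)}{\lambda}_\alpha j^{-|\lambda|}\,\frac{(-1)^{|\lambda|}}{[\mu]_\lambda^\alpha}\,\frac{C_\lambda^\alpha(y^2)}{C_\lambda^\alpha({\bf 1})},$$
while \eqref{ident_Bessel_B} and \eqref{def-hypergeometric-function}, together with the homogeneity of $C_\lambda^\alpha$, yield
$$J_k^B(iy,2\sqrt{x})={}_0F_1^\alpha\bigl(\mu;-y^2/2,\,2x\bigr)=\sum_{\lambda\in\Lambda_n^+}\frac{(-1)^{|\lambda|}}{[\mu]_\lambda^\alpha\,|\lambda|!}\,\frac{C_\lambda^\alpha(y^2)\,C_\lambda^\alpha(x)}{C_\lambda^\alpha({\bf 1})}.$$
A termwise comparison therefore reduces the lemma to the asymptotic
$$\binom{\lambda_j(x)}{\lambda}_\alpha\, j^{-|\lambda|}\,\longrightarrow\,\frac{C_\lambda^\alpha(x)}{|\lambda|!}\quad(j\to\infty),\qquad \lambda\in\Lambda_n^+\ \text{fixed},$$
together with a domination argument allowing the interchange of limit and sum.

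This termwise limit will be the main technical step. The cleanest route is via the shifted Jack polynomial description of the generalized binomial coefficients: for fixed $\lambda$, $\kappa\mapsto \binom{\kappa}{\lambda}_\alpha$ extends to a polynomial in $\kappa_1,\ldots,\kappa_n$ of total degree $|\lambda|$ whose top-degree homogeneous part equals $C_\lambda^\alpha(\kappa)/(|\lambda|!\,C_\lambda^\alpha({\bf 1}))$. Inserting $\kappa=\lambda_j(x)$, dividing by $j^{|\lambda|}$ and using $\lambda_j(x)/j\to x$ with the homogeneity of $C_\lambda^\alpha$ then isolates exactly the leading term. A more self-contained alternative is to proceed by induction on $|\lambda|$ from the defining identity $C_\kappa^\alpha(x+{\bf 1})/C_\kappa^\alpha({\bf 1})=\sum_{\lambda\subseteq\kappa}\binom{\kappa}{\lambda}_\alpha C_\lambda^\alpha(x)/C_\lambda^\alpha({\bf 1})$, reading off the limit by rescaling $x\mapsto x/j$ and expanding at $x=0$.

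For the interchange of limit and sum we use the crude estimate $\binom{\kappa}{\lambda}_\alpha \leq \binom{|\kappa|}{|\lambda|}$, which follows from the nonnegativity of $\binom{\kappa}{\lambda}_\alpha$ (for $\alpha>0$) together with $\sum_{|\lambda|=m,\,\lambda\subseteq\kappa}\binom{\kappa}{\lambda}_\alpha=\binom{|\kappa|}{m}$, obtained by specializing $x=t\,{\bf 1}$ in the generating identity and comparing coefficients in $t$. This produces the $j$-independent majorant $\binom{\lambda_j(x)}{\lambda}_\alpha j^{-|\lambda|}\leq (x_1+\cdots+x_n)^{|\lambda|}/|\lambda|!$. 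Combined with $C_\lambda^\alpha(y^2)/C_\lambda^\alpha({\bf 1})\leq (\max_i y_i^2)^{|\lambda|}$ (from coordinatewise monotonicity of $C_\lambda^\alpha$ on the positive orthant) and the bound $[\mu]_\lambda^\alpha\geq c^{|\lambda|}$ for some $c>0$ (valid since $\mu>(n-1)/\alpha$), one obtains an absolutely convergent dominating series of the form $\sum_m (m+1)^n M^m/m!$, uniform on compact subsets of $C_n^B$. Dominated convergence then delivers the claimed locally uniform limit. The main obstacle is thus the termwise asymptotic for $\binom{\lambda_j(x)}{\lambda}_\alpha$; all other ingredients are routine.
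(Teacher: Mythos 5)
Your proposal follows essentially the same route as the paper's proof: reduce to the termwise asymptotic $j^{-|\lambda|}\binom{\lambda_j(x)}{\lambda}_\alpha \to C_\lambda^\alpha(x)/|\lambda|!$ via the shifted Jack polynomial realization of the binomial coefficients, and justify the interchange of limit and sum using Sahi's nonnegativity of $\binom{\kappa}{\lambda}_\alpha$, Lassalle's identity $\sum_{|\lambda|=m}\binom{\kappa}{\lambda}_\alpha=\binom{|\kappa|}{m}$, the bound $|C_\lambda^\alpha(y^2)|/C_\lambda^\alpha({\bf 1})\le\|y\|_\infty^{2|\lambda|}$ and a geometric lower bound on $[\mu]_\lambda^\alpha$ — exactly the ingredients of the paper. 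One correction: the top-degree homogeneous part of $\kappa\mapsto\binom{\kappa}{\lambda}_\alpha$ is $C_\lambda^\alpha(\kappa)/|\lambda|!$, not $C_\lambda^\alpha(\kappa)/(|\lambda|!\,C_\lambda^\alpha({\bf 1}))$; the extra factor would contradict the (correctly stated) termwise limit and, e.g., the check $\sum_{|\lambda|=m}\binom{\kappa}{\lambda}_\alpha=\binom{|\kappa|}{m}$, so drop it and the argument is sound.
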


\begin{proof} We adopt the method from \cite{Far}. First, recall the monic renormalization of the Jack polynomials 
$C_\lambda^\alpha$, which is  determined by 
$$ P_\lambda^\alpha = m_\lambda + \sum_{\mu < \lambda} d_{\lambda\mu}(\alpha) m_\mu $$
where the $m_\lambda$ are the monomial symmetric functions $\, m_\lambda(z) = \sum_{w\in S_n} z^{w.\lambda} $ and $\mu < \nu$ means that 
$\mu$ is strictly lower than $\nu$ in the usual dominance order on $\Lambda_n^+$. 
According to \cite[formula (16)]{K} and the identities on p. 15 of \cite{KS}, 
 $$ P_\lambda^\alpha = \frac{c_\lambda^\prime}{\alpha^{|\lambda|}|\lambda|!}\, C_\lambda^\alpha \quad\text{with} \quad
  c_\lambda^\prime = \prod_{s\in \lambda} (\alpha(a_\lambda(s) +1) + l_\lambda(s)),$$
 where $a_\lambda(s)$ and $l_\lambda(s)$ denote the arm-length and leg-length of $s\in \lambda$, c.f. \cite{KS, Sah} for the definitions.  
 According to \cite{OO}, the  generalized binomial coefficients $ \binom{\kappa}{\lambda}$  can be written in terms of the so-called shifted 
 Jack polynomials 
 $P_\lambda^*(x,1/\alpha)$ as
 $$ \binom{\kappa}{\lambda} = \frac{P_\lambda^*(\kappa;1/\alpha)}{P_\lambda^*(\lambda;1/\alpha)}.$$
 Moreover, 
 $$ P_\lambda^*(\lambda; 1/\alpha) = \frac{1}{\alpha^{|\lambda|}} c_\lambda^\prime.$$
The shifted Jack polynomials are of the form (c.f. \cite{OO})
$$ P_\lambda^*(z;1/\alpha) = P_\lambda^\alpha (z) + \text{terms of lower degree in dominance order.} $$
Therefore
\begin{equation}\label{limitbinom} \lim_{j\to\infty} \frac{1}{j^{|\lambda|}}\binom{\lambda_j(x)}{\lambda}  = \,\frac{\alpha^{|\lambda|}}{c_\lambda^\prime} \cdot
\lim_{j\to \infty} P_\lambda^\alpha\bigl(\frac{\lambda_j(x)}{j}\bigr) = \frac{\alpha^{|\lambda|}}{c_\lambda^\prime} \cdot 
P_\lambda^\alpha(x) = \frac{C_\lambda^\alpha(x)}{|\lambda|!},
 \end{equation}
and the convergence is locally uniform in $x$. 
With $\alpha = \frac{1}{k_2}, \,  a= k_1-\frac{1}{2}$ and $\mu=\mu(k)= k_1+k_2(n-1) + \frac{1}{2}\,$ we thus obtain
\begin{align} \label{limit_laguerre}\lim_{j\to \infty} \widetilde L^a_{\lambda_j(x)} \bigl(y^2/j;\alpha\bigr) &= 
\lim_{j\to\infty} \sum_{\lambda\subseteq \lambda_j(x)}  \frac{1}{j^{|\lambda|}}\binom{\lambda_j(x)}{\lambda} 
\frac{(-1)^{|\lambda|}}{[\mu]_\lambda^\alpha} \frac{C_\lambda^\alpha(y^2)}{C_\lambda^\alpha({\bf 1})} \notag\\
& = \sum_{\lambda\in \Lambda_n^+} 
\frac{(-1)^{|\lambda|}}{[\mu]_\lambda^\alpha} \cdot \frac{C_\lambda^\alpha(x)\,C_\lambda^\alpha(y^2)}{|\lambda|!\, 
C_\lambda^\alpha({\bf 1})}\, = \, J_k^B(iy, 2\sqrt{x}).
\end{align}   
For the uniformity statement, we proceed as in \cite{V}. We use that the binomial coefficients satisfy 
$ \binom{\kappa}{\lambda} \geq 0 $ for $\lambda \subseteq \kappa$ (Theorem 5 of \cite{S2}) as well as 
$$ \sum_{|\lambda|=m} \binom{\kappa}{\lambda} = \binom{|\kappa|}{m}, \quad m \in \mathbb Z_+ $$
from \cite{L1}. Note also that $\, [\mu]_\lambda^\alpha \geq \, (\frac{1}{2})^{|\lambda|}.$ As the coefficients of the $C_\lambda^\alpha$ in their monomial expansion are nonnegative, we may estimate
$$ \Big\vert\frac{C_\lambda^\alpha(y^2)}{C_\lambda^\alpha({\bf 1})}\Big\vert \leq \|y\|_\infty^{\,2|\lambda|}\,.$$
We therefore obtain
\begin{align*} \sum_{\lambda\subseteq \lambda_j(x)}  \frac{1}{j^{|\lambda|}}\binom{\lambda_j(x)}{\lambda} &
\frac{1}{[\mu]_\lambda^\alpha}\, \Big\vert\frac{C_\lambda^\alpha(y^2)}{C_\lambda^\alpha({\bf 1})}\Big\vert \, \leq \,
 \sum_{m=0}^\infty \,\frac{2^m}{j^m} \,\|y\|_\infty^{2m}\cdot \sum_{|\lambda|=m} \binom{\lambda_j(x)}{\lambda}\, \notag\\ 
&=\,\sum_{m=0}^\infty \Bigl(\frac{2\|y\|_\infty^2}{j}\Bigr)^{\!m} \cdot \!\binom{|\lambda_j(x)|}{m}\, \leq \,\sum_{m=0}^\infty 
\frac{(2n\|y\|_\infty^2\|x\|_\infty)^m}{m!}\,.
\end{align*}
This estimate shows that the convergence in \eqref{limit_laguerre} is locally uniform in $x\in C_n^B.$ 
\end{proof}

The following observation is the central ingredient for the proof of
Theorem \ref{existence-pos-int-rep-lag}. It follows immediately from   identity (4.23) of \cite{BF1}.

\begin{lemma}\label{pos-connection-Lagu} 
For each partition $\kappa\in \Lambda^+_n$ there exist connection coefficients $c_{\kappa,\lambda}= c_{\kappa,\lambda}(a,\alpha)\ge0$ with
$\,\sum_{\lambda\subseteq\kappa}c_{\kappa,\lambda}=1\,$
such that
$$\widetilde L_{\kappa}^{a+ 1/\alpha}(x; \alpha) \,= \,
\sum_{\lambda\subseteq\kappa}c_{\kappa,\lambda}\,\widetilde L_{\lambda}^{a}(x;\alpha)  \quad\quad(x\in \mathbb R^n) .$$
\end{lemma}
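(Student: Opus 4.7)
The plan is to extract Lemma \ref{pos-connection-Lagu} directly from formula (4.23) of \cite{BF1}. That identity expresses the unnormalized multivariate Laguerre polynomial $L_\kappa^{a+1/\alpha}(x;\alpha)$ as an explicit finite sum
$$L_\kappa^{a+1/\alpha}(x;\alpha) \,=\, \sum_{\lambda\subseteq\kappa} A_{\kappa,\lambda}\, L_\lambda^a(x;\alpha),$$
where the coefficients $A_{\kappa,\lambda}$ are given as a product of Pochhammer-type factors, generalized binomial coefficients $\binom{\kappa}{\lambda}$, and arm/leg quantities. The first step is to verify, by direct inspection of that explicit expression, that each $A_{\kappa,\lambda}$ is nonnegative for the parameter range $\alpha = 1/k_2 > 0$ and $a = k_1 - 1/2 \geq -1/2$. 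The nonnegativity of $\binom{\kappa}{\lambda}$ for $\lambda\subseteq\kappa$ has already been invoked in the proof of Lemma \ref{laguerre-to-bessel}, and the Pochhammer factors $[a+q]_\lambda^\alpha$ are strictly positive throughout, since $a+q = \mu(k) > 0$.

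The second step is to pass to the renormalized polynomials $\widetilde L_\kappa^a$. Formula \eqref{def-Laguerre} gives
$$L_\kappa^a(0;\alpha) \,=\, \frac{[a+q]_\kappa^\alpha}{|\kappa|!} \,>\,0,$$
and an analogous positive value for $L_\kappa^{a+1/\alpha}(0;\alpha)$. Dividing the Baker--Forrester identity through by $L_\kappa^{a+1/\alpha}(0;\alpha)$ and inserting the factor $L_\lambda^a(0;\alpha)/L_\lambda^a(0;\alpha)$ in each summand yields
$$\widetilde L_\kappa^{a+1/\alpha}(x;\alpha) \,=\, \sum_{\lambda\subseteq\kappa} c_{\kappa,\lambda}\, \widetilde L_\lambda^a(x;\alpha) \quad \text{with}\quad c_{\kappa,\lambda}\,:=\, A_{\kappa,\lambda}\cdot\frac{L_\lambda^a(0;\alpha)}{L_\kappa^{a+1/\alpha}(0;\alpha)}\,\ge\,0.$$
The normalization $\sum_{\lambda\subseteq\kappa} c_{\kappa,\lambda} = 1$ then drops out by evaluating this identity at $x=0$, since $\widetilde L_\mu^b(0;\alpha) = 1$ for every partition $\mu$ and every admissible parameter $b$.

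The main obstacle is not mathematical but bibliographic: one has to look up the precise form of (4.23) in \cite{BF1}, reconcile the normalization conventions of the Jack polynomials and Pochhammer symbols used there with those fixed in \eqref{normal}--\eqref{def-Laguerre}, and confirm that in our translated form the resulting $A_{\kappa,\lambda}$ are indeed a product of manifestly positive quantities. Once this verification is in place, neither the nonnegativity of $c_{\kappa,\lambda}$ nor the summation to one requires any further computation.
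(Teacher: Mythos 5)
Your proposal follows the same route as the paper, whose entire proof is the observation that the identity is immediate from formula (4.23) of \cite{BF1}: one reads off the explicit nonnegative connection coefficients there (nonnegativity coming from $\binom{\kappa}{\lambda}\ge 0$ and the strictly positive Pochhammer-type factors, since every factor of $[a+q]_\lambda^\alpha$ equals $(k_1+k_2(n-j)+\tfrac12)_{\lambda_j}>0$), renormalizes by the positive values $L_\lambda^a(0;\alpha)$, and obtains $\sum_{\lambda\subseteq\kappa}c_{\kappa,\lambda}=1$ by setting $x=0$, exactly as you describe. The only remaining task you flag, translating the normalization conventions of \cite{BF1} into those of \eqref{def-Laguerre}, is a routine bookkeeping step that the paper likewise leaves implicit.
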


 \begin{proof}[Proof of Theorem \ref{existence-pos-int-rep-lag}]
Let $k=(k_1,k_2), \,a:= k_1-\frac{1}{2}$ and  $\alpha = \frac{1}{k_2}.$  Fix $y\in \mathbb R^n.$ For 
$x\in C_n^B\,$  consider the sequence of partitions 
$(\lambda_j= \lambda_j(x))_{j\in \mathbb N}$
   as described in Lemma \ref{laguerre-to-bessel} with
 $\lim_{j\to\infty} \lambda_j/j=x$. 
 We 
 proceed  as in \cite{RV3} and introduce the discrete probability measures
 $$  \mu_j:= \mu_j(x):=\sum_{\lambda \subseteq \lambda_j}c_{\lambda_{j},\lambda}\,\delta_{\lambda/j}
 \in M^1(\mathbb R^n), \quad j \in \mathbb N;$$
 here $\delta_\xi$ denotes the point measure in $\xi\in \mathbb R^n.$ 
The supports of the $\mu_j$  are contained in the compact set $\,[0, x] \cap C_n^B\,.$ 
In terms of these measures,   Lemma \ref{pos-connection-Lagu} implies that
  \[
\widetilde L_{\lambda_{j}}^{a+k_2}(\xi;\alpha)\, = \, \int_{[0,\eta]} 
\widetilde L_{jw}^{a}(\xi;\alpha) d\mu_j(w), \quad \xi \in \mathbb R^n.
 \]
By Prohorov's theorem (see e.g.~\cite{Kal}),
  the set $\{\mu_j:\> j\in  \mathbb N\}$ is relatively sequentially compact. 
  After passing to a subsequence if necessary, we obtain that the $\mu_j$ tend weakly to a probability measure 
  $m_x\in M^1([0,x])$  as $j\to \infty.$ 
Using Lemma \ref{laguerre-to-bessel}, we conclude that
 \begin{align*}
 J_{(k_1+k_2, k_2)}^B\bigl(iy, 2\sqrt{x}\,\bigr) &=\lim_{j\to\infty}\widetilde L_{\lambda_{j}}^{a+k_2}(y^2/j;\alpha)\,  =\,
  \lim_{j\to\infty}
 \int_{[0,x]} \widetilde L_{jw}^a(y^2/j;\alpha)\, d\mu_j(w) 
 \\
& =\int_{[0,x]}  J_{k}^B\bigl(iy,2\sqrt{w}\,\bigr)\,dm_x(w).
\notag
\end{align*}
This readily implies the assertion. 
\end{proof}

\begin{remark} Our results on complete Sonine formulas for $J_k^B$ in Theorems \ref{int-rep-classical-chamber} and \ref{existence-pos-int-rep-lag} do not cover the case $k=(k_1, k_2)$ with  arbitrary $k_1 \geq 0, k_2 >0$ and $h >k_2(n-1).$ We conjecture that in this case, a positive Sonine formula exists as well. 
	
\end{remark}

We conclude this section with an immediate consequence of 
Theorem \ref{main_Bessel}, Lemma \ref{laguerre-to-bessel}, and the proof of Theorem \ref{existence-pos-int-rep-lag}:
   
\begin{corollary}\label{main_Laguerre} Let $a\geq -1/2,\, \, \alpha >0$  and $h>0.$  Assume that
for each partition $\kappa\in \Lambda_n^+$ there exist nonnegative connection coefficients $c_{\kappa,\lambda}\ge0$ such that
$$L_{\kappa}^{a+h}(x;\alpha)= 
\sum_{\lambda\subseteq\kappa}c_{\kappa,\lambda} L_{\lambda}^{a}(x;\alpha) \quad\quad  (x\in \mathbb R^n).$$
Then  $h$ is contained in the set
$ \,\Sigma(\alpha^{-1})$ of Theorem \ref{main_Bessel}.
\end{corollary}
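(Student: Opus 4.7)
The plan is to mirror the proof of Theorem \ref{existence-pos-int-rep-lag} in reverse: the hypothesized positive connection identity yields a positive measure representation of $\widetilde L_\kappa^{a+h}(\cdot;\alpha)$ in terms of the $\widetilde L_\lambda^a(\cdot;\alpha)$, which via the Laguerre-to-Bessel limit in Lemma \ref{laguerre-to-bessel} converts into a restricted positive Sonine formula of the form \eqref{Sonine_1} for $J_k^B$. Theorem \ref{main_Bessel} then forces $h\in \Sigma(\alpha^{-1})$.

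First I would set $k_2 := 1/\alpha$ and $k_1 := a+\tfrac{1}{2} \geq 0$, so that the parameters match those of Lemma \ref{laguerre-to-bessel}. Because $L_\lambda^a(0;\alpha) = [a+q]_\lambda^\alpha/|\lambda|! > 0$ for every $\lambda \in \Lambda_n^+$ (and likewise with $a$ replaced by $a+h$, since $a+q = k_1+k_2(n-1)+\tfrac12 > 0$), I can pass to the renormalized Laguerre polynomials. Evaluating the hypothesis at $x = 0$ shows that the rescaled coefficients
$$\tilde c_{\kappa,\lambda} := c_{\kappa,\lambda}\,\frac{L_\lambda^a(0;\alpha)}{L_\kappa^{a+h}(0;\alpha)} \geq 0$$
satisfy $\sum_{\lambda \subseteq \kappa}\tilde c_{\kappa,\lambda} = 1$, and the hypothesis takes the form
$$\widetilde L_\kappa^{a+h}(x;\alpha) = \sum_{\lambda \subseteq \kappa}\tilde c_{\kappa,\lambda}\,\widetilde L_\lambda^a(x;\alpha) \qquad (x\in \mathbb R^n).$$

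Next, for fixed $y\in \mathbb R^n$, I would take the approximating partitions $\lambda_j := (j,\ldots,j)$, so that $\lambda_j/j = \mathbf{1} = \lim_j \lambda_j/j$ is the particular point needed in Theorem \ref{main_Bessel}, and form the probability measures
$$\mu_j := \sum_{\lambda \subseteq \lambda_j}\tilde c_{\lambda_j,\lambda}\,\delta_{\lambda/j} \in M^1\bigl([0,\mathbf{1}]\cap C_n^B\bigr).$$
Exactly as in the proof of Theorem \ref{existence-pos-int-rep-lag}, Prohorov's theorem produces a weakly convergent subsequence $\mu_j \to m \in M^1([0,\mathbf{1}])$, and applying Lemma \ref{laguerre-to-bessel} to both sides of the normalized identity at argument $y^2/j$ yields in the limit
$$J_{(k_1+h,k_2)}^B(iy,\,2\mathbf{1}) = \int_{[0,\mathbf{1}]} J_k^B(iy,\,2\sqrt{w})\,dm(w).$$
Using the joint symmetry and the homogeneity $J_k^B(\lambda z,w) = J_k^B(z,\lambda w)$, and replacing $y$ by $y/2$, this rearranges into a restricted Sonine formula of the form \eqref{Sonine_1},
$$J_{(k_1+h,k_2)}^B(\mathbf{1},\,iy) = \int_{C_n^B} J_k^B(\xi,\,iy)\, d\tilde m(\xi) \qquad (y\in \mathbb R^n),$$
where $\tilde m$ is the pushforward of $m$ under the componentwise square root $w \mapsto \sqrt{w}$ (which preserves $C_n^B$). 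Theorem \ref{main_Bessel} now forces $h\in \Sigma(k_2) = \Sigma(\alpha^{-1})$.

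The only real content beyond bookkeeping lies in the normalization step producing $\sum_\lambda \tilde c_{\kappa,\lambda}=1$, and in the rescaling needed to match the result with the normalization $\mathbf{1}$ required by Theorem \ref{main_Bessel}; the tightness and the passage to the limit run verbatim as in the proof of Theorem \ref{existence-pos-int-rep-lag}, with the Baker--Forrester identity and the Laguerre-to-Bessel limit replaced, respectively, by the corollary's hypothesis and Lemma \ref{laguerre-to-bessel}.
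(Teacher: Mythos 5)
Your proposal is correct and follows exactly the route the paper intends (the corollary is stated there as an immediate consequence of Theorem \ref{main_Bessel}, Lemma \ref{laguerre-to-bessel} and the proof of Theorem \ref{existence-pos-int-rep-lag}): normalize the hypothesized expansion at $x=0$ to get probability coefficients, pass to the measures $\mu_j$ for $\kappa=(j,\ldots,j)$, use Prohorov and the Laguerre-to-Bessel limit to obtain a restricted positive Sonine formula at $\mathbf 1$, and invoke Theorem \ref{main_Bessel}. The bookkeeping (positivity of $[a+q]_\lambda^\alpha$, the rescaling $y\mapsto y/2$ and the square-root pushforward) is handled correctly, so nothing is missing.
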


\section{The limit $h\to\infty$}\label{infty}

We finally turn to some application of Theorem \ref{existence-pos-int-rep-lag} for $h\to\infty$ which is based on the fact that
the Bessel function $ J_{(k_1,k_2)}^B$ of type $B_n$ tends to a Bessel function of type $A_{n-1}$ as $k_1\to \infty.$ 

Indeed, the following limit relation follows easily from a comparison of the coefficients in \eqref{Bessel_A} and 
 \eqref{def-hypergeometric-function} and is well-known;
 see e.g.~\cite{RV1} where also estimations for the rate of convergence are given.

\begin{lemma}\label{b-to-a}
Let $n\geq 2$ and $k_2> 0$. Then, locally uniformly in $x,y\in\mathbb R^n$,
$$\lim_{k_1\to+ \infty} J_{(k_1,k_2)}^B(2\sqrt{k_1}\,x, iy)= J_{k_2}^A(x^2,-y^2).$$
 \end{lemma}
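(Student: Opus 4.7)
The plan is to compare the explicit hypergeometric series expansions on both sides term by term and justify the passage to the limit via a dominated convergence argument, in the spirit of the uniformity part of the proof of Lemma \ref{laguerre-to-bessel}.

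First I would use identity \eqref{ident_Bessel_B} with $\alpha = 1/k_2$ and $\mu = \mu(k) = k_1 + k_2(n-1) + 1/2$. Writing $x^2, y^2$ componentwise and invoking the homogeneity of $C_\lambda^\alpha$, this gives
$$J_{(k_1,k_2)}^B(2\sqrt{k_1}\,x,\, iy) \,=\, {}_0F_1^\alpha\bigl(\mu;\, 2k_1 x^2,\, -y^2/2\bigr) \,=\, \sum_{\lambda \in \Lambda_n^+} \frac{(-k_1)^{|\lambda|}}{[\mu]_\lambda^\alpha\,|\lambda|!}\cdot \frac{C_\lambda^\alpha(x^2)\,C_\lambda^\alpha(y^2)}{C_\lambda^\alpha({\bf 1})}.$$
From the factorization $[\mu]_\lambda^\alpha = \prod_{j=1}^n \bigl(k_1 + k_2(n-j) + 1/2\bigr)_{\lambda_j}$ one reads off, for each fixed $\lambda$, the termwise limit $(-k_1)^{|\lambda|}/[\mu]_\lambda^\alpha \to (-1)^{|\lambda|}$ as $k_1 \to \infty$. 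Together with the identity $C_\lambda^\alpha(-y^2) = (-1)^{|\lambda|} C_\lambda^\alpha(y^2)$ and \eqref{Bessel_A} for parameter $\alpha = 1/k_2$, the termwise limit of the series is precisely ${}_0F_0^\alpha(x^2, -y^2) = J_{k_2}^A(x^2, -y^2)$, as claimed.

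It remains to justify the interchange of limit and summation and obtain locally uniform convergence. Since $\mu - (j-1)/\alpha = k_1 + k_2(n-j) + 1/2 \geq k_1$ for $1 \leq j \leq n$ and $k_2 > 0$, one has $[\mu]_\lambda^\alpha \geq k_1^{|\lambda|}$, hence $\bigl|(-k_1)^{|\lambda|}/[\mu]_\lambda^\alpha\bigr| \leq 1$ uniformly in $\lambda$ and $k_1 > 0$. As in the proof of Lemma \ref{laguerre-to-bessel}, the nonnegativity of the monomial coefficients of $C_\lambda^\alpha$ together with \eqref{normal} yields the bounds $0 \leq C_\lambda^\alpha(x^2) \leq \|x\|_\infty^{2|\lambda|} C_\lambda^\alpha({\bf 1})$ and $\sum_{|\lambda|=m} C_\lambda^\alpha({\bf 1}) = n^m$, producing the majorant $\sum_{m\geq 0} (n\|x\|_\infty^2 \|y\|_\infty^2)^m/m!$, which is locally uniform on $\mathbb R^n \times \mathbb R^n$ and independent of $k_1$. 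Dominated convergence for series then completes the proof. There is no serious obstacle here; the argument is essentially bookkeeping of the asymptotics of the Pochhammer factors against uniform bounds for the Jack polynomials.
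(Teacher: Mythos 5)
Your proposal is correct and follows essentially the route the paper itself indicates: the paper does not write out a proof but states that the lemma ``follows easily from a comparison of the coefficients in \eqref{Bessel_A} and \eqref{def-hypergeometric-function}'' (citing \cite{RV1} for details and rates of convergence), which is exactly your termwise limit $(-k_1)^{|\lambda|}/[\mu]_\lambda^\alpha\to(-1)^{|\lambda|}$ after substituting \eqref{ident_Bessel_B}. Your domination argument via $[\mu]_\lambda^\alpha\geq k_1^{|\lambda|}$, the nonnegativity of the monomial coefficients of $C_\lambda^\alpha$, and the bound $\sum_{|\lambda|=m}C_\lambda^\alpha({\bf 1})=n^m$ is the same technique the paper uses for the uniformity statement in Lemma \ref{laguerre-to-bessel}, so it correctly supplies the locally uniform convergence.
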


Using Prohorov's theorem 
as in the proof of Theorem  \ref{existence-pos-int-rep-lag},
 we obtain the following  integral representation from Lemma \ref{b-to-a} and  Theorem  \ref{existence-pos-int-rep-lag}:

\begin{theorem} \label{intrep-a-to-b}
Let $k=(k_1,k_2)$ with $k_1,k_2>0$. Then
  for each $x\in C_n^B$ there exists a unique probability measure $\mu_x = \mu_x(k) \in M^1(  C_n^B)$ such that
\begin{equation}\label{Sonine_limit-allg}
 J_{k_2}^A(x^2,-y^2) =
 \int_{ C_n^B}  J_{(k_1,k_2)}^B(\xi,iy)\, d\mu_x(\xi) \quad\text{for all } \, y\in\mathbb R^n .\end{equation}
\end{theorem}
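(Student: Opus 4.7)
The plan is to combine Theorem \ref{existence-pos-int-rep-lag} with Lemma \ref{b-to-a} via a tightness argument, in the spirit of the concluding step of the proof of Theorem \ref{existence-pos-int-rep-lag}. Fix $x \in C_n^B$ and, for every $j \in \mathbb N$, set $x_j := 2\sqrt{k_1+jk_2}\,x \in C_n^B$. Applying Theorem \ref{existence-pos-int-rep-lag} to the multiplicity $(k_1,k_2)$ with shift $h = jk_2 \in k_2\cdot \mathbb Z_+$ at the point $x_j$ produces probability measures $m_j := m_{x_j} \in M^1(C_n^B)$, with $\mathrm{supp}\, m_j \subseteq [0, x_j]$, such that
$$J^B_{(k_1+jk_2,\, k_2)}(x_j, iy) \,=\, \int_{C_n^B} J^B_{(k_1,k_2)}(\xi, iy)\, dm_j(\xi) \qquad (y \in \mathbb R^n).$$
By Lemma \ref{b-to-a} applied to the sequence $k_1 + jk_2 \to \infty$ (with $k_2$ fixed), the left-hand side of this identity converges, locally uniformly in $y \in \mathbb R^n$, to the function $\phi(y) := J^A_{k_2}(x^2, -y^2)$ as $j \to \infty$.

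The remaining task is to extract a weakly convergent subsequence of $(m_j)$. Unlike in the proof of Theorem \ref{existence-pos-int-rep-lag}, the supports $[0, x_j]$ grow unboundedly, so tightness is not automatic from Prohorov alone. The right-hand side of the identity above is however precisely the Bessel transform of $m_j$ at $y$, and we have just shown it converges pointwise to $\phi$, which is continuous on $\mathbb R^n$ with $\phi(0) = 1$. A L\'evy-type continuity theorem for the Bessel transform (equivalently, the Dunkl transform on $W$-invariant measures) then implies that the family $(m_j)$ is tight and converges, along a subsequence, weakly to some $\mu_x \in M^1(C_n^B)$. Since $\xi \mapsto J^B_{(k_1,k_2)}(\xi, iy)$ is bounded by $1$ and continuous on $C_n^B$ for every fixed real $y$, passing to the limit in the identity along this subsequence yields \eqref{Sonine_limit-allg}.

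Uniqueness of $\mu_x$ is standard and follows from the injectivity of the Dunkl transform on bounded Borel measures, exactly as for the uniqueness statement in Theorem \ref{existence-pos-int-rep-lag} (cf.~\cite[Theorem 2.6]{RV0}). The principal obstacle is the tightness step: because the supports $[0, x_j]$ of the $m_j$ grow with $j$, the direct Prohorov argument available in Theorem \ref{existence-pos-int-rep-lag} does not apply, and one has to appeal to a L\'evy-type continuity result for the Bessel/Dunkl transform. This is the only point where the present situation genuinely differs from the compactly-supported setting of Theorem \ref{existence-pos-int-rep-lag}.
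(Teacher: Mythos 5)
Your construction is exactly the paper's: apply Theorem \ref{existence-pos-int-rep-lag} with $h=jk_2$ at the rescaled points $2\sqrt{k_1+jk_2}\,x$, let $j\to\infty$ using Lemma \ref{b-to-a}, extract a weak limit of the measures, and get uniqueness from injectivity of the Dunkl transform. You also correctly identify the one nontrivial issue, namely that the supports $[0,2\sqrt{k_1+jk_2}\,x]$ are not uniformly bounded, so tightness must be argued. The difference, and the gap, is how you settle that point: you invoke ``a L\'evy-type continuity theorem for the Bessel transform'' as a black box, with no proof and no reference. That is not an off-the-shelf fact in this setting. The classical hypergroup version of L\'evy's continuity theorem (Bloom--Heyer) presupposes a positive convolution structure for the Bessel functions $J^B_{(k_1,k_2)}$ on the chamber, and for general multiplicities $(k_1,k_2)$ of type $B_n$ precisely this positivity is unknown --- it is the circle of questions this paper is about --- so you cannot simply cite a continuity theorem ``for the Dunkl transform on $W$-invariant measures'' in the needed generality without supplying an argument (one can in fact be given, e.g.\ by testing $1-\mathrm{Re}\,\widehat{\mu_j}$ against Gaussians and using that the Dunkl transform of a Gaussian is a Gaussian for all $k\geq 0$, but that proof is missing from your write-up).

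The paper avoids this entirely by a concrete moment bound: expanding both sides of the identity \eqref{int-rep-j} as ${}_0F_1^\alpha$-series in $y$ and comparing the coefficients of the term $\lambda=(1,0,\ldots,0)$ (for which $C^\alpha_\lambda(z)=z_1+\cdots+z_n$) gives
\begin{equation*}
\int_{C_n^B}(\xi_1^2+\cdots+\xi_n^2)\,d\mu_j(\xi)
=\frac{4(k_1+jk_2)\bigl(k_1+k_2(n-1)+1/2\bigr)}{k_1+k_2(n+j-1)+1/2},
\end{equation*}
which stays bounded as $j\to\infty$; uniformly bounded second moments give tightness by Markov's inequality, and Prohorov then yields the convergent subsequence. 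So your skeleton is right, but to make the proof complete you must either prove (or precisely cite) the L\'evy-type continuity statement you rely on, or replace it by the elementary second-moment computation above.
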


\begin{proof} The uniqueness again follows from the injectivity of the Dunkl transform of measures. For the existence, fix $k_1,k_2$ as well as $x\in C_n^B$. Theorem \ref{existence-pos-int-rep-lag} 
shows that for each $j\in\mathbb N$ there is a probability measure $\mu_j\in M^1(C_n^B)$ such that
\begin{equation}\label{int-rep-j}
 J_{(k_1+jk_2,k_2)}^B\bigl(2\sqrt{k_1+jk_2}\cdot x\,, iy\bigr)\,=
\int_{ C_n^B}  J_{(k_1,k_2)}^B(\xi,iy) \,d\mu_j(\xi) \quad \forall \,y\in\mathbb R^n,
\end{equation}
where the support of $\mu_j$ is contained in  $\,\big[0,\,2\sqrt{k_1+jk_2}\cdot x\big]$. We prove that the sequence $(\mu_j)$ 
is tight, which implies that it has a subsequence which converges weakly to some probability measure $\mu_x\in M^1(C_n^B)$; 
for tightness and the existence of convergent subsequences we refer to Section 4  of \cite{Kal}. The arguments at the end of the proof of 
 Theorem  \ref{existence-pos-int-rep-lag} in combination with Lemma \ref{b-to-a} will then lead to
$$  J_{k_2}^A(x^2, -y^2) =
 \int_{ C_n^B}  J_{(k_1,k_2)}^B(\xi,iy)\, d\mu_x(\xi)$$
 as claimed.
In order to check the tightness of $(\mu_j)$, we recapitulate from formulas \eqref{def-hypergeometric-function} and  \eqref{ident_Bessel_B}
that for $k=(k_1,k_2),$
\begin{equation}
 J_{k}^B(z,w) = \,\sum_{\lambda\in \Lambda_n^+} \frac{1}{[\mu(k)]_\lambda^\alpha \,|\lambda|!\,4^{|\lambda|}} \cdot \frac{C_\lambda^\alpha(z^2) 
C_\lambda^\alpha(w^2)}{C_\lambda^\alpha({\bf 1})} \quad (z,w\in \mathbb C^n)
\end{equation}
with $\mu(k)=k_1+k_2(n-1)+\frac{1}{2}$. Hence, by (\ref{int-rep-j}), 
\begin{align}
\sum_{\lambda\in \Lambda_n^+} &\frac{(k_1+jk_2)^{|\lambda|}}{[\mu(k_1+jk_2,k_2)]_\lambda^\alpha\, |\lambda|!}\cdot 
 \frac{C_\lambda^\alpha(x^2)\, 
C_\lambda^\alpha(-y^2)}{C_\lambda^\alpha({\bf 1})}\notag\\
=& \sum_{\lambda\in \Lambda_n^+} \frac{1}{[\mu(k)]_\lambda^\alpha \,|\lambda|!\,4^{|\lambda|}} \cdot 
\frac{C_\lambda^\alpha(-y^2)}{C_\lambda^\alpha({\bf 1})} \cdot \int_{C_n^B} C_\lambda^\alpha(\xi^2) \> d\mu_j(\xi)\notag
\end{align}
for all $y\in [0,\infty[^n$. If we compare the coefficients of this power series in $y$ for $\lambda=(1,0,\ldots,0)$
and use that $C_{(1,0,\ldots,0)}^\alpha(z)=z_1+\ldots+z_n\,$ (c.f. the normalization \eqref{normal}),
we obtain from (\ref{general_Pochhammer}) and a straightforward computation that
\begin{equation}\label{second-moment}
\int_{C_n^B}(\xi_1^2+\ldots+\xi_n^2)\,d\mu_j(\xi)\,= \,\frac{4(k_1+jk_2)(k_1+k_2(n-1)+1/2)}{k_1+k_2(n+j-1)+1/2}
\end{equation}
which remains bounded as $j\to\infty$. By Exercise 4 in Section 4 of \cite{Kal}  this implies the tightness of $(\mu_j)$ and thus the claim.
\end{proof}

We  have no general explicit formula for the measures $\mu_\eta$ in Theorem \ref{intrep-a-to-b}.
 However, in certain cases explicit formulas are known. 
 For example, Lemma \ref{b-to-a} and Theorem \ref{int-rep-classical-chamber} lead for $k_2=d/2\in \{1/2,1,2\}$ to the following

\begin{corollary} 
Let $\,\mathbb F\in \{\mathbb R, \mathbb C, \mathbb H\}$ and $d = \text{dim}_{\mathbb R}\mathbb F$.
Then for $k_1\ge0$, $x\in C_n^B$ and all $z\in \mathbb C^n$,
\begin{equation}\label{Sonine_limit-2}
 J_{d/2}^A(x^2, -z^2) = \int_{ C_n^B} \int_{U_n(\mathbb F)} J_{(k_1,d/2)}^B
\bigl(\sqrt{\sigma(x u\xi u^{-1}x)}\,,z\bigr)\,du\,
   d\mu_{k_1,d}(\xi) \end{equation}
with the probability measure
$$ d\mu_{k_1,d}(\xi)=c_{k_1,d}\prod_{i=1}^n \xi_i^{k_1-1/2}  \prod_{i<j} (\xi_i-\xi_j)^{d}\cdot
e^{-(\xi_1+\ldots+\xi_n)/2} \> d\xi$$
with suitable normalizing constant $c_{k_1,d}>0$.
\end{corollary}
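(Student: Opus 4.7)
The plan is to derive the formula by passing to the limit $h \to \infty$ in Theorem \ref{int-rep-classical-chamber}, in the same spirit as the proof of Theorem \ref{intrep-a-to-b}. Fixing $k_2 = d/2$ and taking $z = iy$ with $y \in \mathbb R^n$, I would substitute $x \mapsto 2\sqrt{k_1+h}\,x$ into the identity of Theorem \ref{int-rep-classical-chamber}. Lemma \ref{b-to-a} then shows that the left-hand side converges locally uniformly in $x,y$ to $J_{d/2}^A(x^2,-y^2)$.

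On the right-hand side, since $x$ is (identified with) a diagonal matrix,
$$\sqrt{\sigma\bigl(2\sqrt{k_1+h}\,x\cdot u\xi u^{-1}\cdot 2\sqrt{k_1+h}\,x\bigr)} = 2\sqrt{k_1+h}\,\sqrt{\sigma(xu\xi u^{-1}x)},$$
so the change of variables $\eta = 4(k_1+h)\,\xi$ absorbs the prefactor and turns the Bessel argument into $\sqrt{\sigma(xu\eta u^{-1}x)}$. The integral then runs over $C_n^B \cap [0, 4(k_1+h)]^n$ against the pushforward $\tilde\rho_h$ of $\rho_{k_1,k_2,h}$ under this scaling.

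The core step is to identify the weak limit of $\tilde\rho_h$. A direct calculation shows that its Lebesgue density is proportional to
$$\prod_{i=1}^n \eta_i^{k_1-1/2}\,\prod_{i<j}(\eta_i-\eta_j)^d \cdot \prod_{i=1}^n\Bigl(1-\frac{\eta_i}{4(k_1+h)}\Bigr)^{h-1-\mu_0}.$$
The classical limit $(1-a/h)^h\to e^{-a}$ makes the last factor converge pointwise on $C_n^B$ to an exponential of a linear form in $\eta_1+\cdots+\eta_n$, reproducing the density of $\mu_{k_1,d}$ up to a constant. Since $B_{k_1,k_2,h}$ has a closed Selberg form for $k_2 = d/2$, Stirling's formula verifies that the normalizing constants converge to $c_{k_1,d}$. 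By Scheff\'e's lemma, the pointwise convergence of probability densities gives $L^1$ (hence weak) convergence $\tilde\rho_h \to \mu_{k_1,d}$.

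To transfer the limit into the integral, I would use the standard bound $|J_{(k_1,k_2)}^B(\xi,iy)|\leq 1$ for $\xi,y\in \mathbb R^n$, which (after averaging over $u\in U_n(\mathbb F)$) exhibits the integrand as a bounded continuous function of $\eta\in C_n^B$. Weak convergence of $\tilde\rho_h$ then yields the claimed identity for $z=iy$, and analytic continuation in $z$ extends it to all $z\in \mathbb C^n$. The main technical obstacle is the bookkeeping of the normalizing constants as the support of $\tilde\rho_h$ grows unboundedly; Scheff\'e's lemma handles this cleanly once pointwise density convergence is in hand, avoiding any need for direct tightness estimates beyond those already used for Theorem \ref{intrep-a-to-b}.
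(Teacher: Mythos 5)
Your route is exactly the one the paper intends: the corollary is presented as a direct consequence of Lemma \ref{b-to-a} combined with Theorem \ref{int-rep-classical-chamber}, and your analytic machinery is fine — substituting $x\mapsto 2\sqrt{k_1+h}\,x$, rescaling the Beta density, passing to the limit against the bounded integrand $|J^B_{(k_1,d/2)}(\xi,iy)|\le 1$, and continuing analytically in $z$; Scheff\'e together with convergence of the Selberg normalization constants (or, even more simply, the domination $\bigl(1-\eta_i/(4(k_1+h))\bigr)^{h-1-\mu_0}\le e^{-c\eta_i}$ for large $h$ plus dominated convergence) does give the weak convergence you need.

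The gap sits exactly where you are vague, namely in identifying the limit measure. With your change of variables $\eta=4(k_1+h)\xi$ one has
\[
\Bigl(1-\frac{\eta_i}{4(k_1+h)}\Bigr)^{h-1-\mu_0}\longrightarrow e^{-\eta_i/4},
\]
so the rescaled densities converge (up to normalization) to $\prod_i\eta_i^{k_1-1/2}\prod_{i<j}(\eta_i-\eta_j)^d\,e^{-(\eta_1+\cdots+\eta_n)/4}$. This is \emph{not} ``the density of $\mu_{k_1,d}$ up to a constant'': it is the image of $\mu_{k_1,d}$ under the dilation $\xi\mapsto 2\xi$, and a dilation cannot be absorbed into a normalizing constant. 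Consequently, what your limit actually produces at $z=iy$ is
\[
J^A_{d/2}(x^2,-y^2)=\int_{C_n^B}\int_{U_n(\mathbb F)} J^B_{(k_1,d/2)}\bigl(\sqrt{\sigma\bigl(xu(2\xi)u^{-1}x\bigr)}\,,iy\bigr)\,du\,d\mu_{k_1,d}(\xi),
\]
i.e.\ the pairing of $J^A(x^2,-y^2)$ with $J^B(\,\cdot\,,iy)$ as in Theorem \ref{intrep-a-to-b}, with an extra dilation by $2$ inside the argument — which is not literally \eqref{Sonine_limit-2} evaluated at $z=iy$ (that would have $J^A(x^2,+y^2)$ on the left and exponential rate $1/2$ in the measure with no dilation). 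So the step ``weak convergence then yields the claimed identity'' is not justified as written: you must carry the scaling constants and the $iy$ versus $z$ convention through explicitly and reconcile the outcome with the stated formula, either by redistributing the dilation between the measure and the Bessel argument or by adjusting the normalization and sign conventions in \eqref{Sonine_limit-2} accordingly.
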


\end{document}